\newcommand{{%
\begin{figure}[ht]
\begin{center}
\resizebox{0.70\textwidth}{!}{\input{.pdf_t}}
\end{center}
\caption{{}}
\label{}
\end{figure}
}}[3]{{%
\begin{figure}[ht]
\begin{center}
\resizebox{0.70\textwidth}{!}{\input{#1.pdf_t}}
\end{center}
\caption{{#2}}
\label{#3}
\end{figure}
}}%
\newcommand{\llcurly}{<}
\newcommand{{%
\begin{figure}[ht]
\begin{center}
$\begin{array}{c@{\hspace{5mm}}c}
\resizebox{0.45\textwidth}{!}{\input{.pdf_t}} &
\resizebox{0.45\textwidth}{!}{\input{.pdf_t}}
\end{array}$
\end{center}
\caption{{}}
\label{}
\end{figure}
}}[4]{{%
\begin{figure}[ht]
\begin{center}
$\begin{array}{c@{\hspace{5mm}}c}
\resizebox{0.45\textwidth}{!}{\input{#1.pdf_t}} &
\resizebox{0.45\textwidth}{!}{\input{#2.pdf_t}}
\end{array}$
\end{center}
\caption{{#3}}
\label{#4}
\end{figure}
}}
\newtheorem{theorem}{Theorem}[section]
\newtheorem{lemma}[theorem]{Lemma}
\newtheorem*{theorem*}{Theorem}
\theoremstyle{definition}
\newtheorem*{definition}{Definition}
\theoremstyle{remark}
\newtheorem{remark}[theorem]{Remark}
\newtheorem{question}[theorem]{Question}
\newtheorem*{remark*}{Remark}
\numberwithin{equation}{section}
\DeclareMathOperator{\asdim}{asdim}
\DeclareMathOperator{\mesh}{mesh}
\DeclareMathOperator{\diam}{diam}
\DeclareMathOperator{\ulim}{ulim}
\DeclareMathOperator{\udim}{udim}
\DeclareMathOperator{\mult}{mult}
\DeclareMathOperator{\supp}{supp}
\DeclareMathOperator{\Int}{Int}
\newcommand{\U}[1]{{\mathcal{U}_{#1}}}
\newcommand{\V}[1]{{\mathcal{V}_{#1}}}
\newcommand{\N}[1]{{\mathcal{N}_{#1}}}
\newcommand{\df}[1]{\em#1\em}
\begin{document}

\bibliographystyle{abbrv}

\title{A new construction of universal spaces for asymptotic dimension}
\author{G. C. Bell}
\address{Department of Mathematics and Statistics,
University of North Carolina at Greensboro,
116 Petty Building, Greensboro, NC, 27412, USA}
\email{gcbell@uncg.edu}
\author{A.Nag\'orko}
\address{
Faculty of Mathematics, Informatics, and Mechanics,
University of Warsaw,
Banacha 2,
02-097 Warszawa,
Poland
}
\email{amn@mimuw.edu.pl}

\subjclass[2010]{Primary 54F45; Secondary 54E15}

\keywords{asymptotic dimension; universal space; uniform dimension}




\begin{abstract}
For each $n$, we construct a separable metric space $\mathbb{U}_n$ that is universal in the coarse category of separable metric spaces with asymptotic dimension ($\asdim$) at most $n$ and universal in the uniform category of separable metric spaces with uniform dimension ($\udim$) at most $n$. Thus, $\mathbb{U}_n$ serves as a universal space for dimension $n$ in both the large-scale and infinitesimal topology. More precisely, we prove: 
  \[
    \asdim \mathbb{U}_n = \udim \mathbb{U}_n = n
  \]
  and such that for each separable metric space $X$,
  \vspace{0.5mm}
  \begin{enumerate}
  \item[a)] if $\asdim X \leq n$, then $X$ is coarsely equivalent to a
    subset of $\mathbb{U}_n$;
  \item[b)] if $\udim X \leq n$, then $X$ is uniformly homeomorphic to
    a subset of $\mathbb{U}_n$.
  \end{enumerate}
\end{abstract}
\maketitle
\thispagestyle{empty}
\section{Introduction}

For each $n$, we construct a space that is universal in the coarse category of separable metric spaces with asymptotic dimension at most $n$.  Such spaces were previously constructed by Dranishnikov and Zarichnyi in~\cite{dranishnikovzarichnyi2004}. Our aim is to give a more transparent construction that highlights the micro-macro analogy between small and large scales.

Our main result is the following theorem.

\begin{theorem}\label{thm:main}
  For each $n$, there exists a separable metric space $\mathbb{U}_n$
  such that 
  \[
    \asdim \mathbb{U}_n = \udim \mathbb{U}_n = n
  \]
  and such that for each separable metric space $X$ the following
  conditions are satisfied.
  \begin{enumerate}
  \item[a)] If $\asdim X \leq n$, then $X$ is \emph{coarsely
      equivalent} to a subset of $\mathbb{U}_n$.
  \item[b)] If $\udim X \leq n$, then $X$ is \emph{uniformly
      homeomorphic} to a subset of~$\mathbb{U}_n$.
  \end{enumerate}
\end{theorem}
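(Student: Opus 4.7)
My plan is to use the fact that both $\asdim$ and $\udim$ admit parallel characterizations via open covers with controlled multiplicity at every scale. Specifically, $\asdim X \leq n$ is equivalent to the existence, for every $R > 0$, of a uniformly bounded cover $\mathcal{U}_R$ of $X$ with Lebesgue number at least $R$ and multiplicity at most $n+1$, while $\udim X \leq n$ is an analogous statement with $R \to 0$. In both cases, the nerve of such a cover is an $n$-dimensional simplicial complex, and a partition of unity subordinate to $\mathcal{U}_R$ yields a canonical nerve map $X \to \mathcal{N}(\mathcal{U}_R)$ which is a coarse (respectively uniform) embedding onto its image when the nerve is metrized so that each simplex has diameter comparable to $R$. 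This is the precise sense in which the micro-macro analogy promised in the introduction should be exploited.

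For the construction, I would build $\mathbb{U}_n$ as a single separable metric simplicial complex containing ``universal'' $n$-dimensional nerves at every scale. Concretely, fix a countable family $\{K_i\}$ of $n$-dimensional simplicial complexes rich enough that every countable $n$-dimensional simplicial complex embeds simplicially into some $K_i$ (for instance, iterated barycentric subdivisions of the $n$-skeleton of the infinite simplex on countably many vertices). Take $\mathbb{U}_n$ to be a telescope of rescaled copies of the $K_i$, glued through mapping cylinders of simplicial refinements, metrized so that the scale parameter $R$ ranges continuously over $(0,\infty)$. The resulting space is separable by construction and is at most $n$-dimensional at every scale because the canonical stars-of-vertices cover has multiplicity $n+1$.

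To prove universality, given a separable $X$ with $\asdim X \leq n$, pick a cofinal sequence $R_i \to \infty$ and covers $\mathcal{U}_i$ with multiplicity $\leq n+1$ and Lebesgue number $\geq R_i$. Each nerve $\mathcal{N}(\mathcal{U}_i)$ embeds simplicially into the scale-$R_i$ slice of $\mathbb{U}_n$, and the partition-of-unity maps combine into a map $X \to \mathbb{U}_n$ which can be shown to be a coarse equivalence onto its image by standard estimates on nerve maps. The $\udim$ case is symmetric, using $R_i \to 0$ and uniform continuity estimates on the partition of unity. The equalities $\asdim \mathbb{U}_n = \udim \mathbb{U}_n = n$ follow from the upper bounds just described together with the lower bounds given by the presence of an isometrically embedded $n$-simplex of arbitrary diameter.

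The main obstacle I expect is arranging a single metric on $\mathbb{U}_n$ that simultaneously certifies $\asdim \leq n$ and $\udim \leq n$ while still being rich enough to absorb every separable $n$-dimensional space in both categories. The scales must be knit together without allowing the multiplicity of the canonical covers to jump at transition regions between scales, and the gluing between complexes at consecutive scales must not create geodesic detours that destroy either the coarse or the uniform type of the nerve maps. Balancing these competing requirements against separability, while keeping the construction sufficiently transparent to justify the micro-macro analogy, is the delicate technical core of the proof.
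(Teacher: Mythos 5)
Your general strategy (anti-\v{C}ech approximations by nerves of multiplicity-$(n+1)$ covers at all scales, a target built from universal $n$-dimensional complexes at every scale) is the same as the paper's, but the proposal stops exactly at the point where the actual work lies, and the two ideas that resolve the difficulty you flag are missing. First, the way the scales are ``knit together'': a telescope of rescaled complexes glued through mapping cylinders will not keep the multiplicity of the canonical covers at $n+1$ in the transition regions (mapping cylinders of non-injective simplicial maps between $n$-complexes are $(n+1)$-dimensional), and it is not clear your gluing keeps both $\asdim$ and $\udim$ down. The paper instead takes a \emph{uniform limit}: the space of threads $(x_m)$ with $x_{m+1}=p_m(x_m)$ in a doubly infinite sequence of uniform complexes of scale $2^{m/2}$ with barycentric bonding maps, metrized by the sup metric. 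Pulling back the open-star cover from level $m$ then gives a cover of the limit with multiplicity $\leq n+1$, mesh $\leq 2^{m/2}$ and Lebesgue number $\gtrsim 2^{m/2}/(n+1)$ simultaneously, which is what certifies both dimensions at once.

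Second, and more seriously, the claim that ``the partition-of-unity maps combine into a map $X\to\mathbb{U}_n$'' has no mechanism behind it. To get a single well-controlled map you need the nerve maps at consecutive scales to be \emph{compatible}: the paper arranges the covers to star-refine each other, uses the induced maps between consecutive nerves, and maps $X$ canonically into the uniform limit of the nerve sequence (this canonical function is then shown to be simultaneously a uniform embedding and a coarse equivalence). Embedding the whole nerve sequence into $\mathbb{U}_n$ then requires the scale-$m$ simplicial embeddings $i_m\colon N_m\to K_m$ to commute with the bonding maps, $p_m i_m = i_{m+1}u_m$; knowing that each nerve embeds into the corresponding slice is not enough. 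The paper achieves this coherence by constructing an $n$-universal self-map $p\colon(K,k)\to(K,k)$ with a barycentric-to-simplicial lifting property, and building the $i_m$ recursively as lifts over an exhaustion of $N_m$ by subcomplexes anchored at a base point. Without a substitute for this lifting property, your construction does not yield the required coarse equivalence or uniform homeomorphism, so the proposal as written has a genuine gap at its core. (A smaller omission: a space may satisfy only one of the two dimension hypotheses, so one must first replace $X$ by $(X,\min(d,1))$ or by a discrete net before running a single argument for both a) and b).)
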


The main instrument that we use is a uniform limit, which for our purposes serves the role of a large-scale analog of an inverse limit. In the first part of the paper we prove that a canonical map into a uniform limit of a suitably chosen anti-\v{C}ech approximation of a space is a coarse equivalence. In the second part we construct a sequence into which every such anti-\v{C}ech approximation of an at most $n$-dimensional space embeds isometrically. At small scales the same arguments apply.

The space $\mathbb{U}_n$ that we construct is self-similar in small and large scales. In particular, the asymptotic cone of $\mathbb{U}_n$ is isometric to $\mathbb{U}_n$ (see Remark~\ref{rem:asymptotic cone}). We leave the following two questions open.

\begin{question}
  Let $\mathbb{U}_n$ be the space constructed in the proof of Theorem~\ref{thm:main}.
  \begin{enumerate}
    \item Is $\mathbb{U}_n$ an $n$-dimensional Polish absolute extensor in dimension $n$?
    \item Is $\mathbb{U}_n$  strongly universal in dimension $n$, i.e., is every map from an $n$-dimensional Polish space into $\mathbb{U}_n$ approximable by embeddings? 
  \end{enumerate}

  Characterization theorems of~\cite{levin2006, nagorkophd} state that any space with the above properties is homeomorphic to the universal $n$-dimensional N\"obeling space.
\end{question}


Let $\U{}$ be an open cover of the metric space $X$. Recall that the mesh of $\U{}$ is $\sup\{\diam U\mid U\in\U{}\}$. The multiplicity (or order) of $\U{}$ is the largest $n$ so that there is a collection of $n$ elements of $\U{}$ with non-empty intersection. A number $\epsilon>0$ is said to be a Lebesgue number for $\U{}$ if every subset of $X$ with diameter less than $\epsilon$ is contained in a member of $\U{}$.


The asymptotic Assouad-Nagata dimension, AN-$\asdim$,  is an asymptotic version of the Assouad-Nagata dimension (see, for example \cite{dranishnikovsmith}). For a metric space $X$, we define AN-$\asdim X\le n$ if there is a $c>0$ and an $r_0>0$ so that for each $r\ge r_0$ there is a cover $\U{}$ of $X$ such that $\mesh(\U{})<cr$, $\mult(\U{})\le n+1$, with Lebesgue number greater that $r$.  Many results concerning the asymptotic dimension can be transferred to corresponding results concerning asymptotic Assouad-Nagata dimension, so a natural question is the following.

\begin{question}
  Is there an analogous construction for asymptotic Nagata-Assouad dimension?
\end{question}



\section{Preliminaries}


The asymptotic dimension of a metric space was introduced by Gromov \cite{gromov1993} in his study of the large-scale geometry of Cayley graphs of finitely generated groups. The definition given there is a large-scale analog of Ostrand's characterization of covering dimension. Asymptotic dimension is an invariant of coarse equivalence (see below) and Roe has shown that it can be defined not only for metric spaces, but also for so-called coarse spaces \cite{roe2003}, (although we will content ourselves here with separable metric spaces). Extrinsic interest in asymptotic dimension was piqued by G.~Yu \cite{yu1998}, who showed that the famous Novikov higher signature conjecture holds for groups with finite asymptotic dimension. This result has subsequently been strengthened, but nevertheless, asymptotic dimension remains an intrinsically interesting invariant of the asymptotic approach to topology.

\begin{definition}
  Let $X$ be a metric space with a fixed metric $d$ and let $n$ be an
  integer.  We say that
  \begin{enumerate}
  \item the \emph{uniform covering dimension $\udim X$} of $X$ is less
    than or equal to $n$, if for each $r > 0$ there exists an open
    cover of $X$ with mesh smaller than $r$, positive Lebesgue number
    and multiplicity at most $n + 1$.
  \item the \emph{asymptotic dimension $\asdim X$} of $X$ is less than
    or equal to $n$, if for each $r < \infty$ there exists an open
    cover of $X$ with Lebesgue number greater than $r$, finite mesh
    and multiplicity at most $n+1$.
  \end{enumerate}
\end{definition}

{%
\begin{figure}[ht]
\begin{center}
\resizebox{0.70\textwidth}{!}{\input{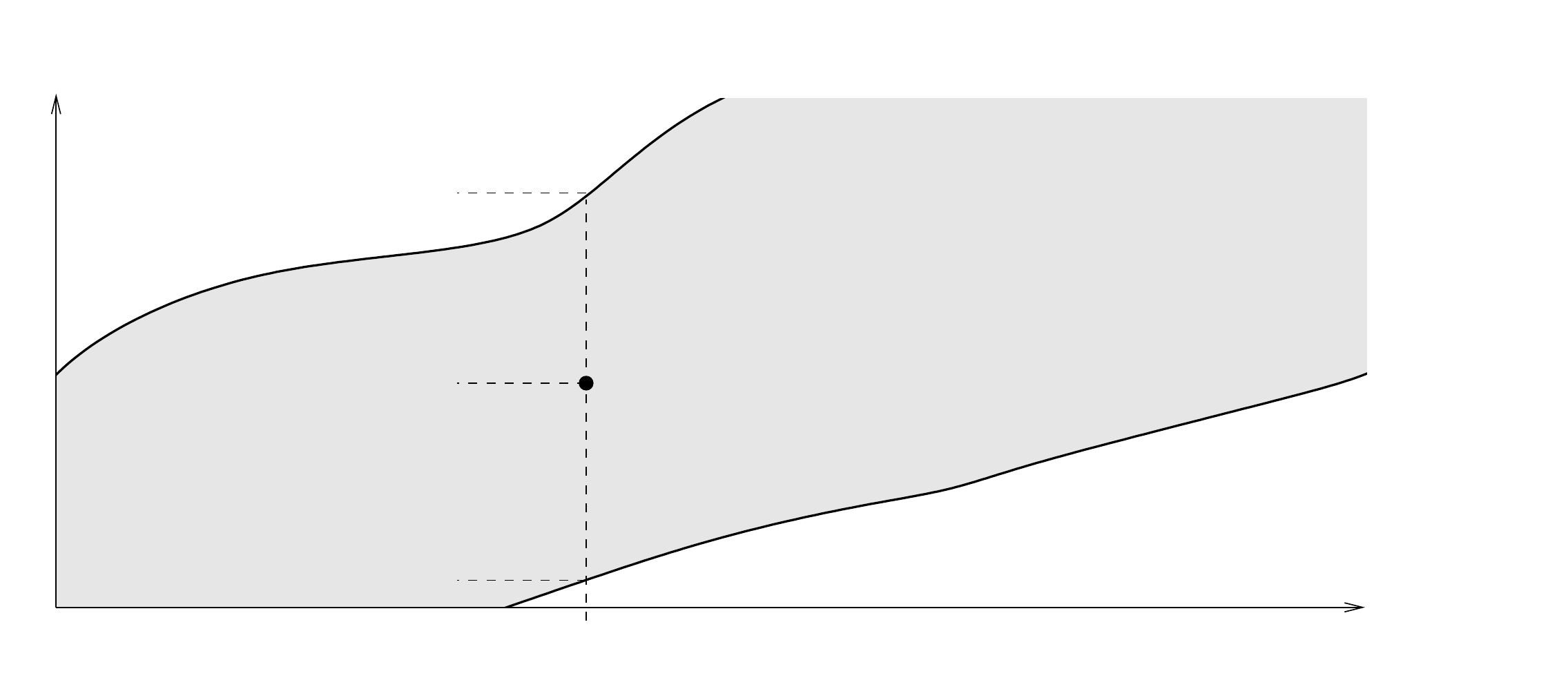_t}}
\end{center}
\caption{{The relation between $\varrho_1$, $f$ and
  $\varrho_2$.}}
\label{coarse_def}
\end{figure}
}

\begin{definition}
  Let $f$ be a map from a metric space $X$ into a metric space $Y$.
  Assume that $\varrho_1, \varrho_2 \colon (0, \infty) \to [0,
  \infty]$ are functions for which the following inequalities hold for each $x_1, x_2 \in X$:
  \[
  \varrho_1(d(x_1,x_2)) \leq d(f(x_1), f(x_2)) \leq
  \varrho_2(d(x_1,x_2)).
  \]
  We say that $f$ is
  \begin{enumerate}
  \item[(I)] \emph{large scale uniform}, if $\varrho_2(r) < \infty$ for each $r$,
  \item[(II)] a \emph{coarse embedding}, if it is large-scale uniform and $\lim_{r
      \to \infty} \varrho_1(r) = \infty$,
  \item[(III)] a \emph{coarse equivalence} if it is a coarse embedding whose
    image is $r$-dense in $Y$ for some $r < \infty$,
  \end{enumerate}

  The above notions correspond to the following notions of
  infinitesimal topology. We say that $f$ is
  \begin{enumerate}
  \item[(i)] \emph{uniformly continuous}, if $\lim_{r \to 0} \varrho_2(r) =
    0$,
  \item[(ii)] a \emph{uniform embedding}, if it is uniformly continuous and
    $\varrho_1(r) > 0$ for each~$r$,
  \item[(iii)] \emph{uniform homeomorphism} if it is a uniform embedding onto
    $Y$.
  \end{enumerate}
\end{definition}

\subsection{Uniform complexes}

We let $\ell_2$ denote the Hilbert space of square summable sequences of real numbers, endowed with the $\|\cdot\|_2$ norm. We identify sequences in $\ell_2$ with maps from the natural numbers into the reals. For an element $x$ of $\ell_2$ we let $x(m)$ denote the $m$th coordinate of $x$. The \emph{support $\supp x$} of an element $x$ of $\ell_2$ is the set of indices of non-vanishing coordinates of $x$, i.e. we let \[ \supp x = x^{-1}(\mathbb{R} \setminus \{ 0 \}) = \left\{ m \in \mathbb{N} \colon x(m) \neq 0 \right\}. \]
We say that a set of points in $\ell_2$ is \emph{contiguous} if their supports have non-empty intersection, i.e. if they have a common non-vanishing coordinate.

For $\kappa > 0$, we let
\[
\Delta_\kappa = \left\{ x \in \ell_2 \colon x(m) \geq 0, \sum_m x(m) = \frac{\kappa}{\sqrt{2}}\right\}
\]
and say that $\Delta_\kappa$ is an \emph{infinite simplex of scale
  $\kappa$ in $\ell_2$}. We say that a simplicial complex is
\emph{uniform of scale~$\kappa$}, if it is endowed with the metric of a
subcomplex of~$\Delta_\kappa$. 

The \emph{barycenter} of a finite collection of points in $\ell_2$ is, by definition, their arithmetic mean. The \emph{barycenter} of a simplex of a uniform simplicial complex is the barycenter of the set of its vertices. A \emph{barycenter} of a uniform simplicial complex is a barycenter of any of its faces (vertices
included), i.e. an element of the complex whose non-vanishing coordinates are all equal. 

\begin{lemma}\label{lem:distances in a uniform complex}
  If $\kappa$ is the scale of a uniform simplicial complex, then the
  lengths of its edges are equal to $\kappa$ and the distance between
  its contiguous barycenters is less than $\kappa/\sqrt{2}$. If the
  complex is $n$-dimensional, then the distance between its disjoint
  simplices is greater than or equal to $\kappa/\sqrt{n+1}$.
\end{lemma}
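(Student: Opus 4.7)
The vertices of $\Delta_\kappa$ are the points $v_m \in \ell_2$ with $v_m(m) = \kappa/\sqrt{2}$ and $v_m(k) = 0$ for $k \neq m$. My first step would be to read off the edge length directly: $\|v_i - v_j\|_2 = \sqrt{2(\kappa/\sqrt 2)^2} = \kappa$. For the rest of the statement I would use the explicit formula for a barycenter $b$ of a $k$-dimensional face with vertex set $S$: namely $b(i) = \kappa/(k\sqrt 2)$ for $i \in S$ and $b(i) = 0$ otherwise.

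Given two contiguous barycenters $b_1, b_2$ corresponding to simplices with vertex sets $S_1, S_2$ of sizes $k_1, k_2$ and with $m := |S_1 \cap S_2| \geq 1$, I would split the squared distance into three sums indexed by $S_1 \cap S_2$, $S_1 \setminus S_2$, and $S_2 \setminus S_1$; a short computation then gives
\[
\|b_1 - b_2\|_2^2 = \frac{\kappa^2}{2}\cdot\frac{k_1 + k_2 - 2m}{k_1 k_2}.
\]
The claim $\|b_1 - b_2\|_2 < \kappa/\sqrt 2$ then reduces to $k_1 + k_2 - 2m < k_1 k_2$, which follows from $(k_1-1)(k_2-1) \geq 0$ together with $m \geq 1$ (with equality to zero only in the trivial case $b_1 = b_2$, so the inequality is strict whenever the barycenters are distinct).

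For the lower bound on disjoint simplices $\sigma_1, \sigma_2$ with disjoint vertex sets $S_1, S_2$, I would use that any $x \in \sigma_1$, $y \in \sigma_2$ have disjoint supports, so
\[
\|x - y\|_2^2 = \sum_{i \in S_1} x(i)^2 + \sum_{i \in S_2} y(i)^2.
\]
Applying Cauchy--Schwarz (equivalently, the power-mean inequality) to each sum using $\sum_{i\in S_j} x(i) = \kappa/\sqrt 2$ gives $\sum_{i \in S_j} x(i)^2 \geq \kappa^2/(2|S_j|)$. Since the complex is $n$-dimensional, $|S_1|, |S_2| \leq n+1$, and summing produces $\|x-y\|_2^2 \geq \kappa^2/(n+1)$.

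The only subtlety I anticipate is bookkeeping in the barycenter computation -- making sure the split over the three index sets is done correctly and that the strict inequality is tracked. Everything else is direct algebra and a single application of Cauchy--Schwarz.
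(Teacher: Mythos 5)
Your proof is correct. The edge-length and contiguous-barycenter parts are essentially the paper's own argument: the paper derives the same closed form $\frac{\kappa^2}{2}\cdot\frac{k_1+k_2-2m}{k_1k_2}$ (written with dimensions $k,l$ in place of your cardinalities $k_1,k_2$) and verifies it at $m=1$. One notational slip to fix: for a $k$-\emph{dimensional} face the barycenter coordinates are $\kappa/((k+1)\sqrt{2})$, not $\kappa/(k\sqrt{2})$; you silently switch to letting $k_i=|S_i|$ in the next sentence, which makes everything downstream consistent, but the first formula as written is off by one. Where you genuinely diverge is the disjoint-simplices bound. The paper asserts that the segment joining the two barycenters is the unique common perpendicular to the two affine spans, concludes that the minimum distance between disjoint simplices is attained at their barycenters, and then reuses the $m=0$ case of the distance formula. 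You instead bound $\|x-y\|_2^2$ for \emph{arbitrary} $x\in\sigma_1$, $y\in\sigma_2$ by splitting over the disjoint supports and applying Cauchy--Schwarz to each block, using $\sum_{i\in S_j}x(i)=\kappa/\sqrt{2}$ and $|S_j|\le n+1$. Your route is more elementary and sidesteps the orthogonality/minimality claim, which the paper states rather tersely and does not really justify; the paper's route, when it works, yields the exact distance $\frac{\kappa}{\sqrt{2}}\sqrt{1/(k+1)+1/(l+1)}$ rather than only the lower bound, though only the bound is used later.
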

\begin{proof}
  By definition, a vertex of an infinite simplex of scale $\kappa$
  has exactly one non-vanishing coordinate equal to $\kappa/\sqrt{2}$.
  The distance between any two such points (which is equal to the
  length of the edges) is equal to $\kappa$. 

  By definition, the barycenter of a collection of points is equal
  to their arithmetic mean. Hence the barycenter of an $n$-dimensional
  simplex of $\Delta_\kappa$ has exactly $n+1$ non-vanishing
  coordinates, all equal to $\kappa/(n+1)\sqrt{2}$. Let $\sigma_1$ be
  the barycenter of a $k$-dimensional simplex of $\Delta_\kappa$ and let
  $\sigma_2$ be the barycenter of an $l$-dimensional simplex of
  $\Delta_\kappa$. If $\sigma_1$ and $\sigma_2$ are contiguous, then the
  number $m$ of common non-vanishing coordinates of $\sigma_1$ and
  $\sigma_2$ is greater than $0$. A direct computation shows that
  \[
  \left\| \sigma_1 - \sigma_2 \right\|_2 =
  \frac{\kappa}{\sqrt{2}}\sqrt{\frac{1}{k+1} + \frac{1}{l+1} -
    \frac{2m}{(k+1)(l+1)}}.
  \]
  We can put $m = 1$ and verify that this norm does not exceed
  $\kappa/\sqrt{2}$.

  A vector that connects barycenters of two disjoint simplices in
  $\Delta_\kappa$ is the only vector simultaneously orthogonal to the affine subspaces spanned by them, hence the distance between disjoint $k$- and $l$-dimensional simplices is equal to the
  distance between their barycenters, which by the above formula (we
  have $m = 0$) is equal to $ (\kappa/\sqrt{2})\sqrt{1/(k+1) +
    1/(l+1)}$.  If $k$ and $l$ are not greater than $n$, then the
  minimal value of this distance is equal to $\kappa/\sqrt{n+1}$.
\end{proof}

\subsection{Barycentric maps}

Let $K$ and $L$ be uniform complexes in $\ell_2$. We say that a map from $K$ into $L$ is \df{barycentric} if it is affine, if it maps vertices of~$K$ to barycenters of~$L$ and if it has the following \df{shrinking} property.

\vspace{2mm}
\begin{tabular}{ll}
  {\sc (sh)} & \begin{tabular}{l}
  If vertices of $M \subset K$ are adjacent to a single vertex, then \\
  their images in $L$ have a common non-vanishing coordinate.
                \end{tabular}
\end{tabular}
\vspace{2mm}

Note that every simplicial map that is barycentric must be constant on connected components of the domain.

Let $\U{}$ be a collection of sets. The \emph{nerve} $N(\U{})$ of $\U{}$ is a uniform simplicial complex in $\ell_2$ whose vertices are in a one-to-one correspondence with $\U{}$ and whose vertices span a simplex if and only if the corresponding elements in~$\U{}$ have non-empty intersection. We let $v(U)$ denote the vertex of $N(\U{})$ that corresponds to an element $U$ in $\U{}$.

Let $\U{}$ be a refinement of a cover $\V{}$. We say that a map from $N(\U{})$ into $N(\V{})$
is a \emph{map induced by inclusions}, if it is affine and if for each $U
\in \U{}$ it maps the vertex $v(U)$ of $N(\U{})$ to the
barycenter of a simplex of $N(\V{})$ that is spanned by vertices $v(V)$ 
corresponding to sets $V \in \V{}$ that contain $U$ as a subset.

\begin{lemma}\label{lem:lipschitz on closed stars}
  If $\U{}$ is a
star-refinement of $\V{}$, then the map from $N(\U{})$ into $N(\V{})$ that is induced by inclusions  is barycentric.
  A barycentric map between uniform complexes of the same scale is
  $1/\sqrt{2}$-Lipschitz on closed stars of vertices.
\end{lemma}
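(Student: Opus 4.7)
The lemma contains two claims, which I would attack separately.

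For the first, I need to verify the three defining conditions of a barycentric map for the induced map $f\colon N(\mathcal{U}) \to N(\mathcal{V})$. Affineness and the assignment of vertices to barycenters are built into the definition of ``induced by inclusions,'' so only the shrinking property \textsc{(sh)} requires argument. The star-refinement hypothesis is exactly what does the work: if vertices $v(U_1), \dots, v(U_k)$ are all adjacent in $N(\mathcal{U})$ to a single vertex $v(U_0)$, then each $U_i$ meets $U_0$, so $U_0 \cup U_1 \cup \cdots \cup U_k \subseteq \st(U_0,\mathcal{U}) \subseteq V$ for some $V \in \mathcal{V}$; consequently $v(V)$ appears as a non-vanishing coordinate of every $f(v(U_i))$.

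For the Lipschitz bound, fix a vertex $v \in K$ and two points $x, y$ in its closed star. I would write $x = \sum_i \lambda_i v_i$ and $y = \sum_i \mu_i v_i$ as affine combinations over the vertices $v_i$ of closed simplices of $K$ that contain $v$ (extending by zero coefficients so the index sets coincide), and set $\delta_i = \lambda_i - \mu_i$, so that $\sum_i \delta_i = 0$. Affineness of $f$ on each simplex then gives
\[
  f(x) - f(y) = \sum_i \delta_i b_i, \qquad b_i := f(v_i).
\]
Applying \textsc{(sh)} to the vertices in the closed star of $v$ produces a coordinate $e_m$ common to every $b_i$, so by Lemma~\ref{lem:distances in a uniform complex} all pairwise distances satisfy $\|b_i - b_j\| \leq \kappa/\sqrt{2}$. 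The vertices $v_i$ of $K$ are pairwise orthogonal in $\ell_2$ with norm $\kappa/\sqrt{2}$, hence $\|x - y\|^2 = (\kappa^2/2) \sum_i \delta_i^2$, and the Lipschitz claim reduces to the quadratic inequality
\[
  \Bigl\|\sum_i \delta_i b_i\Bigr\|^2 \leq \frac{\kappa^2}{4} \sum_i \delta_i^2.
\]

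The main obstacle is precisely this last estimate. One route is the identity $\|\sum_i \delta_i b_i\|^2 = -\tfrac{1}{2} \sum_{i \neq j} \delta_i \delta_j \|b_i - b_j\|^2$ (valid because $\sum_i \delta_i = 0$), which converts the problem into a bound for the squared-distance matrix of the $b_i$ from the pairwise bound supplied by Lemma~\ref{lem:distances in a uniform complex}. A more direct route, which I would pursue, is to exploit the shared coordinate from \textsc{(sh)} by decomposing each $b_i = \alpha_i e_m + b_i^{\perp}$ into its $e_m$-component and its orthogonal complement, and then estimating the parallel and perpendicular pieces of $\sum_i \delta_i b_i$ separately, using $\sum_i \delta_i = 0$ together with the explicit form of the $b_i$ as uniform vectors on their supports to close out the inequality.
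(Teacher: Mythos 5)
Your treatment of the first claim is correct and coincides with the paper's argument: star-refinement produces a single $V \in \V{}$ containing every element of $\U{}$ that meets $U_0$, so $v(V)$ lies in the support of the image of every vertex adjacent to $v(U_0)$, which is exactly {\sc (sh)}.

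For the second claim there is a genuine gap: the proof stops at precisely the step that carries all the content. Your reduction to the inequality $\|\sum_i \delta_i b_i\|^2 \leq \tfrac{\kappa^2}{4}\sum_i \delta_i^2$ (with $\sum_i \delta_i = 0$) is correct, but neither of the routes you sketch can establish it. The first route is quantifiably insufficient: from $\sum_i\delta_i=0$ and the pairwise bound $\|b_i-b_j\|^2\le\kappa^2/2$ alone, your identity yields only $\|\sum_i\delta_i b_i\|^2\le\tfrac{\kappa^2}{2}\bigl(\sum_{\delta_i>0}\delta_i\bigr)^2$, which for $\delta=(\tfrac12,\tfrac12,-\tfrac12,-\tfrac12)$ is twice what you need. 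The second route cannot be closed either, because the inequality is false in the generality in which you have posed it. Concretely: let $K$ be a single $3$-simplex $[v_1,v_2,v_3,v_4]$ of scale $\kappa$ and $L$ a $2$-simplex of scale $\kappa$ with vertices $w_0,w_1,w_2$; the affine map with $f(v_1)=f(v_2)=w_0$ and $f(v_3)=f(v_4)=\beta$, the barycenter of $L$, is barycentric (every image contains the coordinate of $w_0$ in its support, so {\sc (sh)} holds). For $x=(v_1+v_2)/2$ and $y=(v_3+v_4)/2$, both in the closed star of $v_1$, one computes $\|x-y\|=\kappa/\sqrt2$ but $\|f(x)-f(y)\|=\|w_0-\beta\|=\kappa/\sqrt3$, a ratio of $\sqrt{2/3}>1/\sqrt2$. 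So the Lipschitz constant on a closed star is not controlled by the vertex-to-vertex contraction, and no decomposition along the shared coordinate $e_m$ will change that. I note that the paper's own proof makes exactly the leap you were suspicious of --- it asserts that, by affineness, it suffices to check the contraction on pairs of vertices --- and that reduction is refuted by the same example. Your instinct to isolate the quadratic inequality as ``the main obstacle'' was sound; but as posed the obstacle is not surmountable, and the statement needs either additional hypotheses or a weaker (e.g.\ dimension-dependent) constant rather than a cleverer estimate.
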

\begin{proof}
  Let $U$ be an element of $\U{}$. Since $\U{}$ star-refines $\V{}$, there is an element~$V$ of~$\V{}$ that contains all elements of $\U{}$ that intersect $U$. By definition, a vertex of $N(\U{})$ that corresponds to an element of $\U{}$ that intersects $U$ is mapped by the map induced by inclusions to a barycenter of a simplex that contains $v(V)$. Hence the intersection of the supports of these barycenters contains the support of $v(V)$, so it is non-empty; hence these barycenters are contiguous and the map induced by inclusions is barycentric.

  Since a barycentric map is affine, to prove the second assertion of the lemma it suffices to check that a barycentric map
  shrinks distances between vertices of closed stars by a factor of
  $1/\sqrt{2}$. By definition, a barycentric map maps any two
  vertices of a closed star of a vertex in the domain into contiguous
  barycenters in the codomain. By Lemma~\ref{lem:distances in a
    uniform complex}, the distance between any two vertices in a
  complex of scale $\kappa$ is equal to $\kappa$ and the distance
  between contiguous barycenters is not greater than $\kappa/\sqrt{2}$.
  We are done.
\end{proof}

\section{Uniform limits}

Let $\mathcal{S} = \{ f_k \colon X_k \to X_{k+1} \}$ be a sequence of
functions of metric spaces. The space of threads
\[
\left\{ (x_k) \in \prod_{k = -\infty}^{\infty}  X_k \colon x_{k+1} = f_k(x_k) \right\}
\]
equipped with the sup metric
\[
d(x,y) = \sup_{k} d_{X_k}(x_k, y_k)
\]
(we allow infinite distances) is called the \emph{uniform limit of
  $\mathcal{S}$} and is denoted by $\ulim \mathcal{S}$.

{%
\begin{figure}[ht]
\begin{center}
\resizebox{0.70\textwidth}{!}{\input{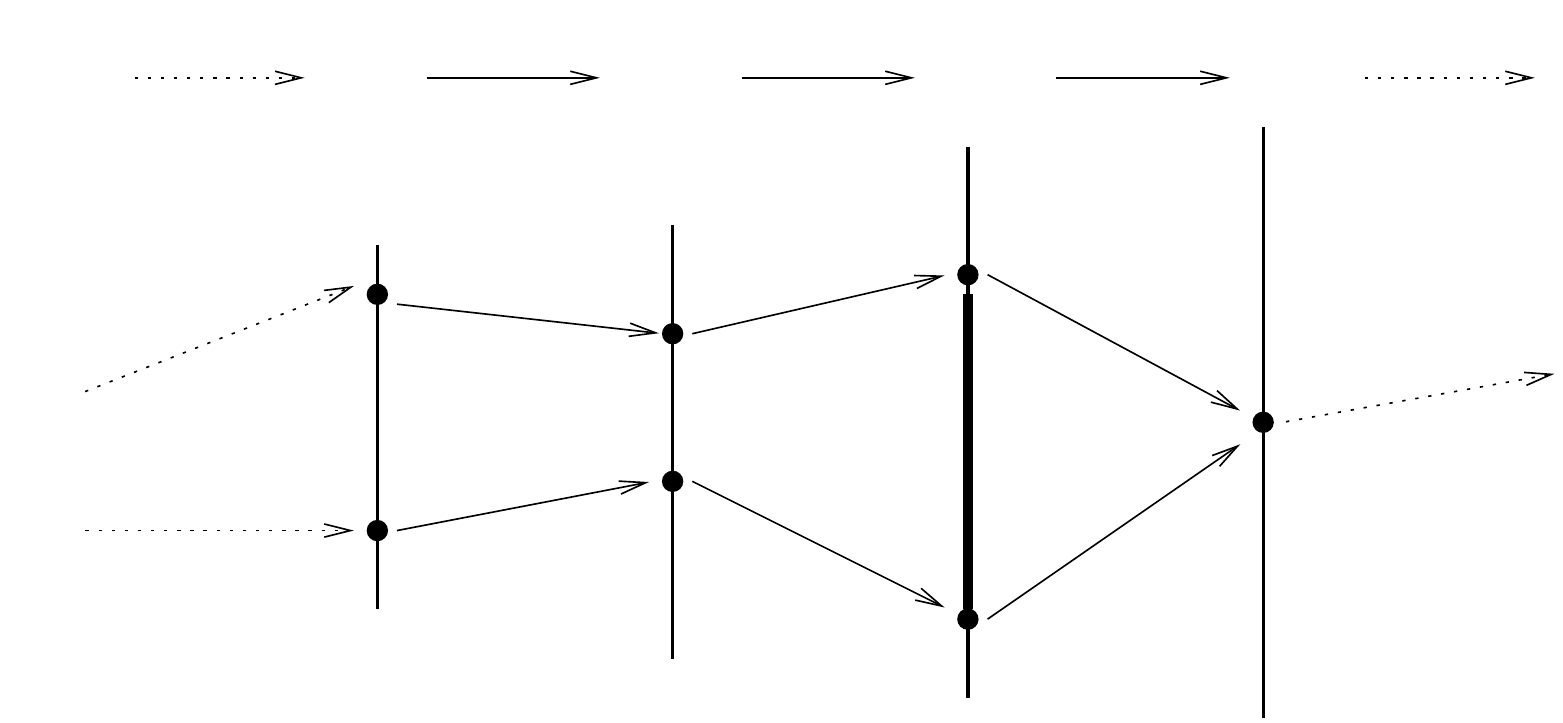_t}}
\end{center}
\caption{{Threads in the uniform limit.}}
\label{threads}
\end{figure}
}

\begin{lemma}\label{lem:complete uniform limits}
  The uniform limit
  \[
    \ulim \xymatrix{ \cdots \ar[r]^{f_{k-2}} & X_{k-1}
    \ar[r]^{f_{k-1}} & X_k \ar[r]^{f_k} & X_{k+1} \ar[r]^{f_{k+1}} &
    \cdots}
  \]
  of a sequence of complete spaces with continuous bonding maps is
  complete.
\end{lemma}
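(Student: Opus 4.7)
The plan is the standard completeness argument for an inverse-limit-style construction, adapted to the sup metric. Let $(x^{(n)})_{n \in \mathbb{N}}$ be a Cauchy sequence in $\ulim \mathcal{S}$. First I would extract a candidate limit coordinatewise: since $d_{X_k}(x^{(n)}_k, x^{(m)}_k) \leq d(x^{(n)}, x^{(m)})$ whenever the right-hand side is finite, and since a Cauchy sequence in an extended metric space has all sufficiently late terms at finite mutual distance, the sequence $(x^{(n)}_k)_n$ is Cauchy in $X_k$ for each fixed $k$. By completeness of $X_k$, it converges to some $x_k \in X_k$.

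Next I would verify that the tuple $x = (x_k)$ is a thread. This is where continuity of the bonding maps enters: for each $k$,
\[
f_k(x_k) = f_k\!\left(\lim_n x^{(n)}_k\right) = \lim_n f_k(x^{(n)}_k) = \lim_n x^{(n)}_{k+1} = x_{k+1},
\]
so $x \in \ulim \mathcal{S}$.

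Finally I would show that the convergence $x^{(n)} \to x$ holds in the sup metric, not merely coordinatewise. Given $\varepsilon > 0$, choose $N$ so that $d(x^{(n)}, x^{(m)}) < \varepsilon$ whenever $n, m \geq N$. For any fixed $k$ and any $n \geq N$, letting $m \to \infty$ in $d_{X_k}(x^{(n)}_k, x^{(m)}_k) < \varepsilon$ yields $d_{X_k}(x^{(n)}_k, x_k) \leq \varepsilon$. Taking the supremum over $k$ gives $d(x^{(n)}, x) \leq \varepsilon$, which completes the argument.

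There is no real obstacle here; the only mild subtlety is that the sup metric is allowed to take the value $+\infty$, so one must remember to work with indices $n, m$ large enough that the Cauchy condition forces finite distance before invoking the coordinatewise inequality. Once that is noted, the proof is a two-line coordinatewise extraction followed by the customary diagonal-free sup-metric passage to the limit.
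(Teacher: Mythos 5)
Your proof is correct and is essentially the paper's argument unpacked: the paper simply cites that a countable product of complete spaces with the sup metric is complete and that continuity of the bonding maps makes the thread space a closed subspace, which is exactly the coordinatewise extraction, the continuity step, and the sup-metric passage to the limit that you carry out explicitly. Your remark about handling the possibly infinite distances is a fair point of care that the paper leaves implicit.
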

\begin{proof}
  A countable product of complete spaces equipped with the sup
  metric is complete. If the bonding maps are continuous, then the
  space of threads is a closed subspace of the entire product, hence
  it is complete as well.
\end{proof}

\subsection{Uniform sequences}

We say that a sequence
\[
  \mathcal{K} = \xymatrix@C=35pt{
    \cdots \ar[r]^{f_{m-2}} & K_{m-1} \ar[r]^{f_{m-1}} & K_m \ar[r]^{f_m} & K_{m+1} \ar[r]^{f_{m+1}} & \cdots
  }
\]
is \emph{$n$-uniform} if each $K_m$ is an at most $n$-dimensional uniform complex of scale $2^{m/2}$ and each $f_m$ is a barycentric map.

\begin{lemma}\label{lem:n-dimensional ulim}
  The uniform limit of an $n$-uniform sequence of complexes is a separable metric space whose asymptotic and uniform dimensions are not greater than $n$. 
\end{lemma}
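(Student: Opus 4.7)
The plan is to construct, for each integer $m$, an open cover $\V{m}$ of $X = \ulim \mathcal{K}$ that is countable, has multiplicity at most $n+1$, mesh at most $2\cdot 2^{m/2}$, and Lebesgue number at least $c_n \cdot 2^{m/2}$ for some positive constant $c_n$ depending only on $n$. Choosing $m \to -\infty$ will then produce arbitrarily fine covers, which witness both $\udim X \leq n$ and (via the resulting countable basis) separability of $X$; choosing $m \to +\infty$ will produce arbitrarily large Lebesgue numbers witnessing $\asdim X \leq n$.

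The cover I take is $\V{m} := \pi_m^{-1}(\U{m})$, where $\pi_m \colon X \to K_m$ is the coordinate projection (which is $1$-Lipschitz directly from the sup metric) and $\U{m}$ is the cover of $K_m$ by open stars of its vertices. Countability of $\V{m}$ is clear since $K_m$ is a subcomplex of $\Delta_{2^{m/2}}$, whose vertex set is $\mathbb{N}$. Multiplicity at most $n+1$ is immediate from $\dim K_m \leq n$, and the Lebesgue number estimate follows from Lemma~\ref{lem:distances in a uniform complex} (disjoint simplices of $K_m$ are at least $2^{m/2}/\sqrt{n+1}$ apart), pulled back along the $1$-Lipschitz projection.

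The mesh estimate is the technical crux. For $x, y \in \pi_m^{-1}(\overline{\st_m(v)})$ the supremum $\sup_k d_{K_k}(x_k, y_k)$ splits into three ranges. Backward levels $k < m$ are controlled automatically by $\diam K_k \leq \diam \Delta_{2^{k/2}} = 2^{k/2}$. At level $m$, the closed star has diameter at most $2\cdot 2^{m/2}$. For forward levels I iterate the shrinking property: the vertices of $\overline{\st_m(v)}$ are all adjacent to $v$, so applying (sh) to $M = \overline{\st_m(v)}$ produces a common non-vanishing coordinate $w_m$ for all the vertex images in $K_{m+1}$, and a one-line affine-combination check then gives $f_m(\overline{\st_m(v)}) \subset \overline{\st_{m+1}(w_m)}$. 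Moreover, because the scales $2^{m/2}$ and $2^{(m+1)/2}$ differ by exactly $\sqrt{2}$, the computation in the proof of Lemma~\ref{lem:lipschitz on closed stars} yields that $f_m$ is $1$-Lipschitz (rather than $1/\sqrt{2}$-Lipschitz) on closed stars, the $1/\sqrt{2}$ contraction factor being cancelled by the scale ratio. Iterating these two observations places $x_{m+j}, y_{m+j}$ in a common closed star of $K_{m+j}$ for every $j \geq 0$ and keeps their distance bounded by the initial $2\cdot 2^{m/2}$.

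With $\V{m}$ in hand, each assertion of the lemma reduces to a suitable choice of $m$, and for separability one observes that the mesh of $\V{m}$ tends to $0$ as $m \to -\infty$, so $\bigcup_m \V{m}$ is a countable basis for the topology. The principal obstacle throughout is the forward iteration in the mesh bound, which hinges on the fortunate matching between the geometric contraction rate of barycentric maps and the $\sqrt{2}$ scale factor between consecutive complexes in an $n$-uniform sequence.
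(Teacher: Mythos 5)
Your construction is the paper's: pull back the open-star covers of the $K_m$ along the coordinate projections, get multiplicity $\leq n+1$ from $\dim K_m \leq n$, and control the mesh by propagating closed stars forward using the shrinking property together with the fact that the $1/\sqrt{2}$ contraction of a barycentric map is exactly cancelled by the $\sqrt{2}$ ratio of consecutive scales. That mesh argument --- which you correctly identify as the crux --- matches the paper's step for step, and your treatment of separability and of the two limiting regimes $m \to \pm\infty$ is also the same.

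There is, however, one genuine flaw: the Lebesgue number of the open-star cover of $K_m$ does \emph{not} follow from the fact that disjoint simplices are at least $2^{m/2}/\sqrt{n+1}$ apart. That fact only controls \emph{pairs} of points with disjoint supports; a set of small diameter can have pairwise intersecting supports and still lie in no single open star. Concretely, in a $2$-simplex of scale $\kappa$ the three edge midpoints are pairwise at distance $\kappa/2 < \kappa/\sqrt{3}$, their supports pairwise intersect, yet no vertex lies in all three supports, so the set is contained in no open star; hence the Lebesgue number is at most $\kappa/2$, strictly below the disjoint-simplex bound you invoke. The conclusion you want (Lebesgue number at least $c_n 2^{m/2}$) is still true, but it needs the argument the paper uses: given $p \in K_m$, pick a vertex $v$ at which $p$ attains a greatest coordinate; since $p$ has at most $n+1$ non-vanishing coordinates summing to $2^{m/2}/\sqrt{2}$, that coordinate is at least $2^{(m-1)/2}/(n+1)$, and any $q$ outside $\st(v)$ has $v$-coordinate zero, so $\|p-q\|_2 \geq 2^{(m-1)/2}/(n+1)$. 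This gives the ball of that radius about $p$ inside $\st(v)$, and the bound pulls back along the $1$-Lipschitz projection as you intended.
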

\begin{proof}
  Let $\mathcal{K} = \left\{ f_m \colon K_m \to K_{m+1} \right\}$ be an $n$-uniform sequence of complexes. We define the open star of a vertex $v$ in $K_m$ to be the set of all points in $K_m$ whose supports include the support of $v$ as a subset. Let $O^m_v \subset \ulim \mathcal{K}$ be the set of all threads whose $m$th coordinate is in the open star of $v$. Let $\mathcal{O}_m$ be the collection of sets $O^m_v$ over all vertices $v$ of $K_m$.

It follows from the definition that every complex is covered by open stars of its vertices. Moreover, the multiplicity of such a cover is equal to the dimension of the complex plus one. Hence $\mathcal{O}_m$ is a countable open cover of $\ulim \mathcal{K}$ and its multiplicity is not greater than $n + 1$.

Let $O^m_v \in \mathcal{O}_m$. Let $(x_k), (y_k) \in O^m_v$. By Lemma~\ref{lem:distances in a uniform complex}, the diameter of $K_k$ is at most $2^{k/2}$; hence the distance from $x_k$ to $y_k$ is at most $2^{k/2}$. By Lemma~\ref{lem:lipschitz on closed stars} and by the choice of scales of the $K_m$'s, $f_m$ is $1$-Lipschitz on the closed star of~$v$; hence the distance from $x_{m+1} = f_m(x_m)$ to $y_{m+1} = f_m(y_m)$ is not greater than the distance from $x_m$ to $y_m$, i.e. than $2^{m/2}$. By assumption, $f_m$ is barycentric, hence supports of $x_{m+1}$ and $y_{m+1}$ have non-empty intersection, so $x_{m+1}$ and $y_{m+1}$ are in a star of some vertex of $K_{m+1}$. Therefore by induction, the distance from $x_k$ to $y_k$ is not greater than $2^{m/2}$ for all $k \geq m$. Hence
\begin{equation}\tag{1}
  \mesh(\mathcal{O}_m) \leq 2^{m/2}.
\end{equation}

Let $p$ be a point in $K_m$. Let $v$ be a vertex of $K_m$ whose (only) non-vanishing coordinate is a greatest coordinate of $p$ (possibly, one of many). Since $K_m$ is an at most $n$-dimensional uniform complex of scale $2^{m/2}$, a greatest coordinate of $p$ is not less than $2^{(m-1)/2}/(n+1)$. If $q$ is a point in the complement (in $K_m$) of the open star of $v$, then the support of $q$ is disjoint from the support of $v$, so $\| p - q \|_2 \geq 2^{(m-1)/2}/(n+1)$. Hence the Lebesgue number of a cover of $K_m$ by open stars of vertices is greater than or equal to this constant. Since the metric on $\ulim \mathcal{K}$ is the $\sup$ metric on threads, we have
\begin{equation}\tag{2}
  \lambda(\mathcal{O}_m) \geq 2^{(m-1)/2}/(n+1).
\end{equation}

By the definition, $\ulim \mathcal{K}$ is a metric space. The $\mathcal{O}_m$'s are countable and by~(1) have arbitrarily small meshes, hence $\ulim \mathcal{K}$ is separable.

Let $r > 0$. Let $m$ be an integer such that $2^{m/2} < r$. The collection $\mathcal{O}_m$ covers $\ulim \mathcal{K}$ and its multiplicity is not greater than $n + 1$. By (1), the mesh of $\mathcal{O}_m$ is less than $r$. By (2), the Lebesgue number of $\mathcal{O}_m$ is positive. By  definition, $\udim \ulim \mathcal{K} \leq n$.

Let $R < \infty$. Let $m$ be an integer such that $2^{(m-1)/2}/(n+1) > R$. The collection $\mathcal{O}_m$ covers $\ulim \mathcal{K}$ and its multiplicity is not greater than $n + 1$. By (1), the mesh of $\mathcal{O}_m$ is finite. By (2), the Lebesgue number of $\mathcal{O}_m$ is greater than $R$. By definition, $\asdim \ulim \mathcal{K} \leq n$.
\end{proof}

\begin{remark}
  It is implicit in the proof of Lemma~\ref{lem:n-dimensional ulim} that the Nagata-Assouad dimension of an uniform limit of an $n$-uniform sequence is at most $n$ (in both small and large scales).
\end{remark}



\subsection{Canonical functions}
Let
\[
   \mathcal{U} = \cdots \llcurly \U{m-1} \llcurly \U{m} \llcurly \U{m+1} \llcurly \cdots
\]
be a sequence of covers of a separable metric space $X$ such that $\U{m-1}$ star-refines $\U{m}$ for each $m$. Let $N_m$ be a nerve of $\U{m}$ realized as a uniform complex of scale $2^{m/2}$. Let $p_m \colon N_m \to N_{m+1}$ be a map induced by inclusions. We say that a sequence 
\[
  \N{\U{}} =  \xymatrix{
    \cdots \ar[r]^{p_{m-2}} & N_{m-1} \ar[r]^{p_{m-1}} & N_m \ar[r]^{p_m} & N_{m+1} \ar[r]^{p_{m+1}} & \cdots
  }
\]
is a \emph{sequence of nerves associated with $\U{}$}. Clearly, $\N{\U{}}$ is $n$-uniform (cf. Lemma~\ref{lem:lipschitz on closed stars}).


Let $\pi_m \colon \ulim \N{\U{}} \to N_m$ be the projection onto the $m$th coordinate. We say that a function $\varphi$ from $X$ into $\ulim \N{\U{}}$ is a \emph{canonical function of $\U{}$} if for each $m$ the composition $\pi_m \circ \varphi$ is a canonical function from $X$ into the nerve of $\U{m}$, i.e. a function that maps $x \in X$ into a simplex of $N_m$ spanned by vertices corresponding to elements of $\U{m}$ that contain $x$. We do not require these functions to be continuous.

\begin{lemma}\label{lem:existence of a canonical function}
  If 
  \[
   \mathcal{U} = \cdots \llcurly \U{m-1} \llcurly \U{m} \llcurly \U{m+1} \llcurly \cdots
  \]
  is a sequence of covers of a space $X$ such that $\U{m}$ star-refines $\U{m+1}$ for each $m$, then $\U{}$ admits a canonical function.
\end{lemma}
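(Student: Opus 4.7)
The plan is to construct, for each $x \in X$, a thread $\varphi(x) = (y_m(x))_m \in \ulim \N{\U{}}$ whose $m$-th coordinate lies in the simplex $S_m(x) \subset N_m$ spanned by vertices corresponding to members of $\U{m}$ that contain $x$. There is no obvious compatible choice of such threads, so the idea is to build them by forward propagation from arbitrarily far in the past. Using the axiom of choice, for each $x$ and each $m \in \mathbb{Z}$ I pick some $U_m(x) \in \U{m}$ with $x \in U_m(x)$, let $v_m(x) = v(U_m(x))$ denote the corresponding vertex of $N_m$, and for $n \leq m$ set
\[
  z^n_m(x) := p_{m-1} \circ p_{m-2} \circ \cdots \circ p_n(v_n(x)).
\]
Because $p_k$ sends $S_k(x)$ into $S_{k+1}(x)$ (any $V \in \U{k+1}$ containing some $U \ni x$ itself contains $x$), every $z^n_m(x)$ lies in $S_m(x)$.

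The heart of the argument is a Cauchy estimate for $(z^n_m(x))_n$ as $n \to -\infty$ with $m$ fixed. Since the scales satisfy $\kappa_{k+1}/\kappa_k = \sqrt{2}$, Lemma~\ref{lem:lipschitz on closed stars} yields that each $p_k$ is $1$-Lipschitz on closed stars of vertices of $N_k$; and since any two vertices of $S_k(x)$ are mutually adjacent through the point $x$, the whole simplex $S_k(x)$ lies inside the closed star of any one of its vertices. Hence the restriction of $p_{m-1} \circ \cdots \circ p_n$ to $S_n(x)$ is $1$-Lipschitz, and combined with $\diam S_k(x) \leq \kappa_k = 2^{k/2}$ this gives
\[
  \|z^n_m(x) - z^{n'}_m(x)\|_2 \leq \diam S_{n'}(x) \leq 2^{n'/2}
  \qquad \text{for } n \leq n' \leq m,
\]
which tends to $0$ as $n' \to -\infty$. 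By completeness of $\ell_2$, $y_m(x) := \lim_{n \to -\infty} z^n_m(x)$ exists. Continuity of $p_m$ on the relevant closed star then yields $p_m(y_m(x)) = y_{m+1}(x)$, so $(y_m(x))_m$ is a thread and setting $\varphi(x) := (y_m(x))_m$ gives a function of the required form.

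The hard part will be verifying that $y_m(x)$ genuinely lies in $S_m(x)$, and not merely in its $\ell_2$-closure: the affine constraint $\sum_U y_m(x)(U) = \kappa_m/\sqrt{2}$ that defines membership in $\Delta_{\kappa_m}$ is not automatically preserved under $\ell_2$-limits, since $\ell_1$-mass can escape to infinity along the coordinates when $\{U \in \U{m} : x \in U\}$ is infinite. I expect to handle this by tracking supports and masses along the construction---every $z^n_m(x)$ is supported in $\{U \in \U{m} : x \in U\}$ with the same $\ell_1$-mass $\kappa_m/\sqrt{2}$, so concentration under the Lipschitz estimates keeps mass from leaking---or, in the cases that matter for the subsequent constructions, by passing to a point-finite refinement so that each $S_m(x)$ is a compact finite-dimensional simplex, in which case the limit automatically lies inside $S_m(x)$ and the canonical property $\pi_m \circ \varphi(x) \in S_m(x)$ holds by construction.
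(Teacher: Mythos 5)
Your construction is essentially the paper's own proof in different packaging: the paper defines the nested closed sets $D^x_m$ of partial threads that start in the simplex $\delta^x_m$ ($=S_m(x)$) at coordinate $m$, observes that $p_m(\delta^x_m)\subset\delta^x_{m+1}$ and that the forward compositions are $1$-Lipschitz on these simplices so that $\diam D^x_m\le 2^{m/2}\to 0$ as $m\to-\infty$, and applies Cantor's intersection theorem; your forward propagation of a chosen vertex from level $n$ and the Cauchy estimate as $n\to-\infty$ is the same argument with the completeness step made explicit. The one issue you leave open --- whether $y_m(x)$ lies in $S_m(x)$ rather than merely in its $\ell_2$-closure --- is not a genuine gap: for the notion of a canonical function to make sense at all, only finitely many elements of $\U{m}$ may contain $x$ (in the application the multiplicity is at most $n+1$), so $S_m(x)$ is a finite-dimensional, hence compact and closed, simplex, and the limit automatically stays inside; no mass-tracking and no passage to a refinement is needed. (The paper silently relies on the same point-finiteness when it speaks of ``the simplex spanned by all elements of $\U{m}$ containing $x$'' and when it invokes Cantor's theorem.)
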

\begin{proof}
  Let $\N{\U{}} = \left\{ p_m \colon N_m \to N_{m+1} \right\}$ be a sequence of nerves associated with $\U{}$. Let $x \in X$. Let $\delta^x_m$ be a simplex in $N_m$ spanned by vertices corresponding to all elements of $\U{m}$ that contain $x$. Let
\[
	D^x_m = \left\{ (x_k) \in \prod_{k \in \mathbb{Z}} N_k \colon x_m \in \delta^x_m \text{ and } x_{k+1} = p_k(x_k) \text{ for } k \geq m \right\}.
\]
Clearly, each $D_m$ is closed (in the product equipped with the $\sup$ metric) and non-empty. Let $U$ be an element of $\U{m}$ that contains $x$. By definition, $p_m$ maps the vertex $v(U)$ of $N_m$ that corresponds to $U$ onto the barycenter of a face of $N_{m+1}$ that is spanned by vertices corresponding to elements of $\U{m+1}$ that contain $U$ as a subset. But all such elements of $\U{m+1}$ contain $x$, so $p_m$ maps $v(U)$ onto a barycenter of a face of $\delta^x_{m+1}$. Since $p_m$ is affine on $\delta^x_m$, we have $p_m(\delta^x_m) \subset \delta^x_{m+1}$. Hence $D^x_m \subset D^x_{m+1}$ for each $m$. Since $p_k$ is $1$-Lipschitz on $\delta^x_k$ for each $k$, the diameter of $D^x_m$ is equal to the supremum of diameters of $N_k$'s over $k \leq m$, which by Lemma~\ref{lem:distances in a uniform complex} is equal to $2^{m/2}$. Hence by the Cantor theorem, the intersection $\bigcap_m D^x_m$ is non-empty. We define $\varphi$ by the formula $\varphi(x) \in \bigcap_m D^x_m$.
  It is a canonical function of $\U{}$, directly from the definition.
\end{proof}

\begin{remark}
  By the uniform limit theorem proved in the next section, a canonical function of a sequence of covers whose meshes converge to zero at $-\infty$ is always uniformly continuous.
\end{remark}

\subsection{Uniform limit theorem}

Theorem~\ref{thm:uniform limit theorem} is the main theorem of the third section.

\begin{theorem}\label{thm:uniform limit theorem}
  Let
  \[
   \mathcal{U} = \cdots \llcurly \U{m-1} \llcurly \U{m} \llcurly \U{m+1} \llcurly \cdots
  \]
  be a sequence of covers of a metric space $X$. Assume that for each $m$ the multiplicity of $\U{m}$ is at most $n+1$, $\U{m}$ star-refines $\U{m+1}$, the Lebesgue number of $\U{m}$ is positive and its mesh is finite.
  Let $\varphi$ be a canonical function into a uniform limit of a sequence of nerves associated with $\U{}$.

  \begin{enumerate}
  \item If $\lim_{k \to -\infty} \mesh(\U{k}) = 0$, then $\varphi$ is
    a uniform embedding with a dense image. In particular, if $X$ is
    complete, then $\varphi$ is a uniform homeomorphism.
  \item If $\lim_{k \to \infty} \lambda(\U{k}) = \infty$, then $\varphi$ is a coarse
    equivalence.
  \end{enumerate}
\end{theorem}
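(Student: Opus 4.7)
The plan is to prove quantitative upper and lower bounds on $d(\varphi(x),\varphi(y))$ in terms of $d(x,y)$ and the combinatorial data $\mesh(\U{k})$, $\lambda(\U{k})$; both conclusions then follow by reading off the appropriate limiting behaviour. Since the metric on $\ulim\N{\U{}}$ is the supremum of the level distances $d_{N_k}(\pi_k\varphi(x),\pi_k\varphi(y))$, every estimate reduces to a level-by-level one. Throughout I will use that star-refinement forces both $\lambda(\U{k})$ and $\mesh(\U{k})$ to be monotone non-decreasing in $k$.

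For the lower bound: if $d(x,y)>\mesh(\U{k})$, then no element of $\U{k}$ contains both $x$ and $y$, so the simplices $\delta^x_k$ and $\delta^y_k$ in which $\pi_k\varphi(x)$ and $\pi_k\varphi(y)$ lie are vertex-disjoint, and Lemma~\ref{lem:distances in a uniform complex} yields a level-$k$ separation of at least $2^{k/2}/\sqrt{n+1}$. In case~(1), $\mesh(\U{k})\to 0$ as $k\to-\infty$ makes such a $k$ available for every positive $r$, so $\varrho_1(r)>0$; in case~(2), $\lambda(\U{k})\to\infty$ forces $\mesh(\U{k})\to\infty$ and lets $k\to\infty$ as $r\to\infty$, so $\varrho_1(r)\to\infty$.

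For the upper bound, let $k_0$ be minimal with $\lambda(\U{k_0})>r:=d(x,y)$. For every $k\ge k_0$ some $U\in\U{k}$ contains both $x$ and $y$, hence $v(U)\in\delta^x_k\cap\delta^y_k$ and $\pi_k\varphi(x),\pi_k\varphi(y)$ both lie in the closed star of $v(U)$ in $N_k$, at distance at most $2\cdot 2^{k/2}$. The bonding maps $p_k$ are $1$-Lipschitz on closed stars: Lemma~\ref{lem:lipschitz on closed stars} gives $1/\sqrt{2}$ for equal-scale maps, and the scale factor $\sqrt{2}$ between $N_k$ and $N_{k+1}$ absorbs this to $1$. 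Since the common-closed-star property persists level by level, the level-$k$ distance for $k\ge k_0$ is dominated by the level-$k_0$ one. For $k<k_0$ the crude bound $\|\pi_k\varphi(x)-\pi_k\varphi(y)\|_2\le\sqrt{2}\cdot 2^{k/2}$, coming from $\|p\|_2\le 2^{k/2}/\sqrt{2}$ for $p\in\Delta_{2^{k/2}}$, is smaller. Combined, $d(\varphi(x),\varphi(y))\le 2\cdot 2^{k_0/2}$. Since $\lambda\le\mesh$, case~(1) yields $\varrho_2(r)\to 0$ as $r\to 0$ (uniform continuity) and case~(2) yields $\varrho_2(r)<\infty$ for every finite $r$ (large-scale uniformity).

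Finally, for the image-density statement: given a thread $(x_k)$, let $\sigma_{k_0}$ be the carrier simplex of $x_{k_0}$ in $N_{k_0}$ and pick $x\in X$ in the (non-empty, by definition of the nerve) common intersection of the sets of $\U{k_0}$ labelling the vertices of $\sigma_{k_0}$. Then $\sigma_{k_0}\subseteq\delta^x_{k_0}$, so $\pi_{k_0}\varphi(x)$ and $x_{k_0}$ share the single simplex $\delta^x_{k_0}$ and differ by at most $2^{k_0/2}$; the inclusion $p_{k_0}(\delta^x_{k_0})\subseteq\delta^x_{k_0+1}$ together with $1$-Lipschitzness on closed stars propagates this bound to all $k\ge k_0$, while the $\sqrt{2}\cdot 2^{k/2}$ estimate handles $k<k_0$. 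Letting $k_0\to-\infty$ in case~(1) yields density of $\varphi(X)$, which together with completeness of $\ulim\N{\U{}}$ (Lemma~\ref{lem:complete uniform limits}) upgrades $\varphi$ to a uniform homeomorphism when $X$ is complete; any fixed $k_0$ in case~(2) makes $\varphi(X)$ uniformly dense, completing the coarse equivalence. The main technical care is the scale bookkeeping that converts the $1/\sqrt{2}$ of Lemma~\ref{lem:lipschitz on closed stars} into a $1$, and the verification that $\lambda$ and $\mesh$ are non-decreasing under star-refinement.
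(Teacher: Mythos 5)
Your argument is correct and takes essentially the same route as the paper: the Lebesgue-number/closed-star estimate, propagated forward by the $1$-Lipschitz bonding maps and capped below level $k_0$ by the diameters of the complexes, gives $\varrho_2$; the mesh/disjoint-simplex estimate from Lemma~\ref{lem:distances in a uniform complex} gives $\varrho_1$; and density follows from hitting carrier simplices at arbitrarily negative (resp.\ a fixed) level. One minor attribution slip worth noting: $\varrho_2(r)\to 0$ as $r\to 0$ follows from the positivity of the Lebesgue numbers alone, and $\varrho_1(r)\to\infty$ from the finiteness of the meshes, rather than from $\lambda\le\mesh$ combined with the case hypotheses (which govern the other two conclusions, $\varrho_1>0$ and $\varrho_2<\infty$) --- but the quantitative bounds you establish deliver all four conclusions regardless.
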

\begin{proof}
 Let $\N{\U{}} = \left\{ p_m \colon N_m \to N_{m+1} \right\}$ be a sequence of nerves associated with $\U{}$. Let $x_1, x_2 \in X$. If for some $k$ the distance from $x_1$ to $x_2$ is smaller than the Lebesgue number of $\U{k}$, then for each $m \geq k$, $\varphi$ composed with the projection onto the $m$th coordinate maps $x_1$ and $x_2$ into the closed star of some vertex of $N_m$. By Lemma~\ref{lem:lipschitz on closed stars}, $p_m$ is $1$-Lipschitz on closed stars of vertices. Hence the distance from $\varphi(x_1)$ to $\varphi(x_2)$ is realized either at the $k$th coordinate or earlier. By Lemma~\ref{lem:distances in a uniform complex}, the diameters of the complexes at these coordinates are bounded by $2^{k/2}$. Hence
\begin{enumerate}
    \item if $d(x_1, x_2) \leq \lambda(\U{k})$, then $d(\varphi(x_1), \varphi(x_2)) \leq 2^{k/2}$.
\end{enumerate}

If for some $k$ the distance from $x_1$ to $x_2$ is strictly greater than the mesh of $\U{k}$, then $\varphi$ composed with the projection onto the $k$th coordinate maps $x_1$ and $x_2$ into disjoint simplices in $N_k$. By Lemma~\ref{lem:distances in a uniform complex}, the distance between disjoint simplices in an at most $n$-dimensional complex of scale $2^{k/2}$ is at least $2^{k/2}/\sqrt{n+1}$. Hence
\begin{enumerate}\addtocounter{enumi}{1}
    \item if $d(x_1, x_2) > \mesh(\U{k})$, then $d(\varphi(x_1),
      \varphi(x_2)) \geq 2^{k/2}/\sqrt{n + 1}$.
\end{enumerate}

{%
\begin{figure}[ht]
\begin{center}
\resizebox{0.70\textwidth}{!}{\input{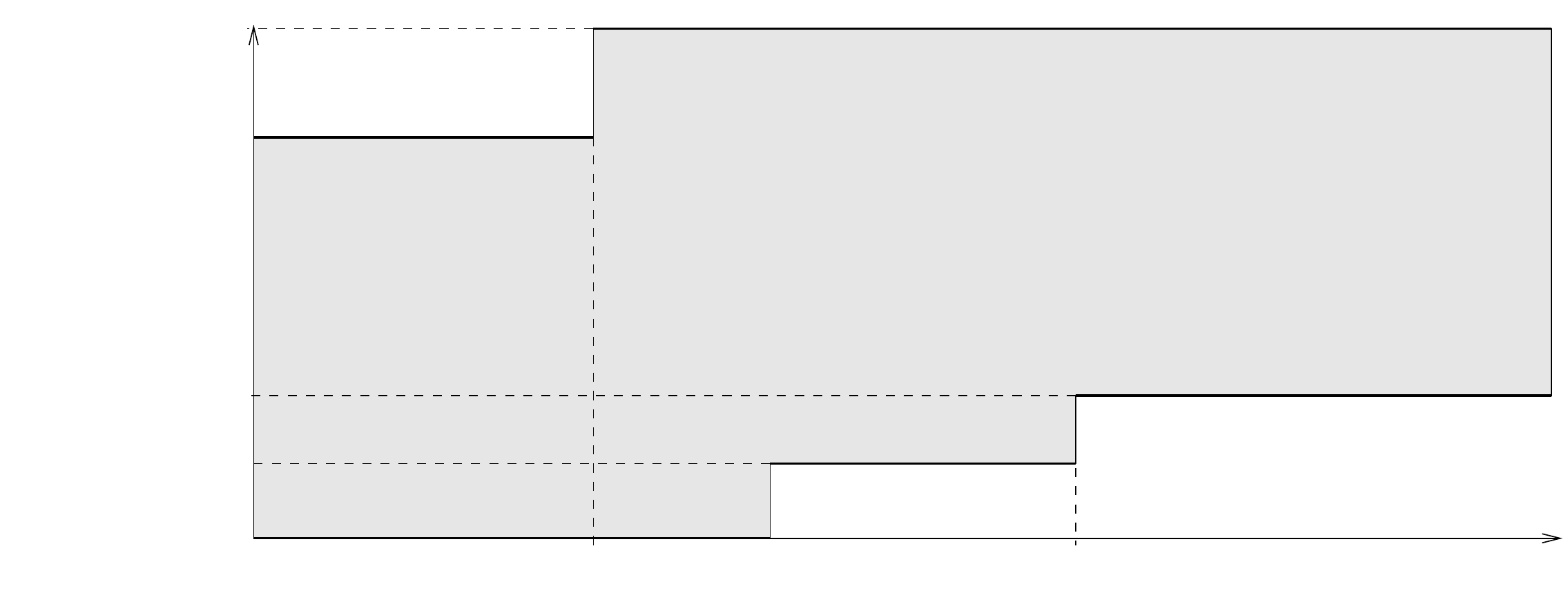_t}}
\end{center}
\caption{{Bounds on a canonical map into $\ulim \left(N_0 \to N_1\right)$.}}
\label{canonical}
\end{figure}
}

Let
\[
	\varrho_1(r) = \sup_k \left\{ 2^{k/2}/\sqrt{n+1} \colon r > \mesh(\U{k})  \right\} 
\]

and
\[
	\varrho_2(r) = \inf_k \left\{ 2^{k/2} \colon r \leq \lambda(\U{k}) \right\}. 
\]

By (1) and (2) we have
\[
	\varrho_1(d(x_1, x_2)) \leq d(\varphi(x_1), \varphi(x_2)) \leq \varrho_2(d(x_1, x_2)).
\]

Let $\varepsilon > 0$ and let $k$ be an integer such that $2^{k/2} < \varepsilon$. If $r \leq \lambda(\U{k})$, then $\varrho_2(r) \leq 2^{k/2} < \varepsilon$. By the assumption that the Lebesgue numbers of the $\U{k}$'s are positive, we have $\lim_{r \to 0} \varrho_2(r) = 0$. Hence $\varphi$ is uniformly continuous. If $\lim_{k \to -\infty} \mesh(\U{k}) = 0$, then for each $r$ there exists $k$ such that $r \geq 2 \mesh(\U{k})$.  Hence $\varrho_1(r) > 0$ for each $r$, so $\varphi$ is an uniform embedding by the definition.

If $\lim_{k \to \infty} \lambda(\U{k}) = \infty$, then for each $r < \infty$ there exists $k$ such that $\lambda(\U{k}) > r$. Hence $\varrho_2(r) < \infty$ and $\varphi$ is large-scale uniform by the definition. Let $R < \infty$ and let $k$ be an integer such that $2^{k/2}/\sqrt{n+1} > R$. If $r \geq 2 \mesh(\U{k})$, then $\varrho_1(r) \geq 2^{k/2}/\sqrt{n+1} > R$. By the assumption that the meshes of the $\U{k}$'s are finite, we have $\lim_{r \to \infty} \varrho_1(r) = \infty$. Hence $\varphi$ is a large-scale embedding.

It follows from the definition of a canonical function that for each $k$ the projection onto the $k$th coordinate of the image of $\varphi$  intersects the closed stars of all vertices of $N_k$. By Lemma~\ref{lem:lipschitz on closed stars} the image of $\varphi$ is $2^{k/2}$ dense for each $k$. Therefore, if $\varphi$ is a uniform embedding, then it has a dense image; if $\varphi$ is a coarse embedding, then it is a coarse equivalence.
\end{proof}

\section{Construction of universal spaces}

Throughout the entire section we let $(A, a)$ denote a simplicial complex $A$ with a base vertex $a$. A map $f \colon (A, a) \to (B, b)$ is a map from $A$ into $B$ such that $f(a) = b$.


\begin{definition}
  Let $L$ be a subcomplex of a complex $K$. We let $\Int_K L$ denote the \df{combinatorial interior of $L$ in $K$}, i.e. the largest subcomplex of $L$ that is disjoint from the closure of $K \setminus L$.
\end{definition}

\begin{definition}
  Let $p \colon (\widetilde K, \tilde k) \to (K, k)$ be a barycentric map. We say that $p$ has the \df{barycentric-to-simplicial lifting property with respect to $n$-dimensional complexes} if the following condition is satisfied.

\vspace{2mm}
\begin{tabular}{ll}
  {({\sc bts}$_n$)} & 
\begin{minipage}{290pt}
  \vspace{1mm}
  \em
  For each pair $((A ,a),(B, a))$ of at most $n$-dimensional simplicial complexes and each commutative diagram
  \[
    \xymatrix@M=8pt@C=35pt{
    (\widetilde K, \tilde k) \ar[r]^{p} & (K, k)  \\
    (B, a) \ar[r]^{\subset} \ar[u]_{g} & (A, a) \ar[u]^{G} \\
    }
  \]
  such that $g$ is a simplicial embedding and $G$ is a barycentric map, there exists a simplicial embedding $\tilde g \colon (A, a) \to (\widetilde K, \tilde k)$ such that $p \tilde g = G$ and $\tilde g$ is equal to $g$ on the combinatorial interior of $B$ (taken in $A$).
  \em
  \vspace{1mm}
\end{minipage}
\end{tabular}
\vspace{2mm}  
\end{definition}

\begin{definition}
  We say that a barycentric map $p \colon (K, k) \to (K, k)$ is \df{$n$-universal} if for each 
   subcomplex $L$ of $K$ the restriction of $p$ to the preimage of $L$ has the barycentric-to-simplicial lifting property with respect to $n$-dimensional complexes.
\end{definition}

\begin{theorem}\label{thm:universal map}
  There exists an $n$-dimensional uniform simplicial complex $K$ and an $n$-universal map $p \colon (K, k) \to (K, k)$.
\end{theorem}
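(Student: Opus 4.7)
The plan is to build $(K,p)$ by a countable induction of Fra\"iss\'e--Urysohn type, constructing a nested sequence of finite $n$-dimensional subcomplexes $K_0 \subset K_1 \subset \cdots$ together with compatible partial barycentric self-maps $p_i$, starting from $K_0 = \{k\}$ and $p_0 = \mathrm{id}$. At each stage I would interleave two kinds of moves. First, a \emph{lifting step}: process one entry from a predetermined enumeration (listing every possibility with infinite multiplicity) of data $(L, A, B, g, G)$, where $L$ is a finite subcomplex of $K_i$, $(A,B)$ is a finite at most $n$-dimensional pair, $g \colon (B,a) \to p_i^{-1}(L)$ is a simplicial embedding, and $G \colon (A,a) \to L$ is a barycentric map with $p_i g = G|_B$. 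Second, a \emph{saturation step}: for each simplex of $K_i$, add fresh vertices of $K_{i+1}$ that $p_{i+1}$ sends to its barycenter, so that future lifting problems will always find enough distinct preimages available.

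To execute a lifting step I would attach to $K_i$ a combinatorial copy of $A \setminus \Int_A B$: for each vertex $v$ of $A$ outside $\Int_A B$, introduce a new vertex $\tilde v$ and set $p_{i+1}(\tilde v)$ equal to the barycenter of the simplex of $L$ whose barycenter equals $G(v)$ (well-defined because $G$ is barycentric), choosing $\tilde v$ from the reservoir of preimages created by earlier saturation steps and disjoint from the vertices already used by $g$. Adjoin simplices accordingly and extend $p_{i+1}$ affinely. The shrinking condition {\sc (sh)} is inherited from $G$, because the images of vertices in any closed star of $A$ share a non-vanishing coordinate coming from $G$. Setting $K = \bigcup_i K_i$, $p = \bigcup_i p_i$, and realizing $K$ as a subcomplex of $\Delta_\kappa$ for any chosen $\kappa > 0$ yields an $n$-dimensional uniform complex with a barycentric self-map fixing $k$.

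To verify $n$-universality, given any lifting problem for $p$ restricted to $p^{-1}(L)$ with $L \subset K$, I would first restrict everything to the countably many simplices actually involved; the resulting datum matches some entry of the enumeration, and the stage at which it was processed produced the required simplicial embedding $\tilde g$ with $p \tilde g = G$ and $\tilde g = g$ on $\Int_A B$.

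The hardest part is the self-referential nature of the condition $p \colon (K,k) \to (K,k)$: adding simplices to solve one problem enlarges $K$ and therefore spawns fresh subcomplexes $L$ with new lifting demands. This is precisely why every datum must be listed with infinite multiplicity (so each problem is re-examined against every larger $K_i$) and why saturation must be interleaved (so preimages of each barycenter remain abundant enough to supply distinct lifts, which is what makes $\tilde g$ a genuine simplicial \emph{embedding} rather than merely a map).
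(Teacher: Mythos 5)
Your back-and-forth construction is a genuinely different route from the paper's, which builds $p$ explicitly as $r\circ q$: an auxiliary complex $\mathcal{D}$ of decorated vertices $(\sigma_1,\sigma_2)$ carries a ``reserved common coordinate'' $\sigma_1$ that makes barycentric maps liftable to simplicial maps by a closed formula, and a simplicial $q$ with infinite simplex fibers turns simplicial lifts into embeddings; the complex $K$ is then the increasing union of iterated preimages of a point. However, your proposal has a genuine gap: the definition of {\sc (bts$_n$)} places no finiteness restriction on the pair $(A,B)$ or on the subcomplex $L$, and in the intended application (Theorem~\ref{thm:limits of universal sequences}) the complexes $A=N^k_m$ are nerves of covers of unbounded separable metric spaces, hence typically infinite. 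Your verification of universality rests on the claim that any posed lifting problem ``matches some entry of the enumeration,'' but your enumeration lists only finite data, and an infinite problem references the completed $K$ (through $L\subset K$ and $g\colon B\to p^{-1}(L)$), so it is not available at any finite stage and cannot be processed at one. Exhausting an infinite $A$ by finite subcomplexes does not help as stated, because the lifts produced at the different stages where those finite sub-problems are processed need not cohere into a single embedding $\tilde g$ of $A$; one would need a greedy vertex-by-vertex lift performed \emph{after} the problem is posed, and making that succeed against all future choices is exactly the coherence problem that the paper's $\sigma_1$-bookkeeping in $\mathcal{D}$ is designed to solve. This is the missing idea, not a routine fix.

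A secondary point: your justification of the shrinking property {\sc (sh)} only addresses closed stars of the newly adjoined vertices. The delicate case is an old vertex $g(u)$ that acquires new neighbors; there the images of its old star (controlled by {\sc (sh)} for $p_i$) and of its new neighbors (controlled by {\sc (sh)} for $G$) share an element but need not share a common non-vanishing coordinate. The argument can be rescued, but only by invoking the definition of the combinatorial interior: a vertex $u\in\Int_A B$ has its entire closed star inside $B$, so every new neighbor $\tilde v$ of $g(u)$ satisfies $p_{i+1}(\tilde v)=G(v)=p_i(g(v))$ for an already-existing neighbor $g(v)$, and the image set of the star of $g(u)$ does not actually grow. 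You should say this explicitly; as written the claim that {\sc (sh)} is ``inherited from $G$'' does not cover the interface. (Relatedly, if the base vertex $a$ lies in $B\setminus\Int_A B$ your recipe replaces $g(a)=\tilde k$ by a fresh vertex, violating the requirement $\tilde g(a)=\tilde k$.)
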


The proof of Theorem~\ref{thm:universal map} is postponed until the next subsection. By Theorem~\ref{thm:universal map}, there exists an $n$-universal map $p \colon (K,k) \to (K,k)$, where $K$ is an $n$-uniform simplicial complex of some fixed scale. Without any loss of generality we may assume that the scale of $K$ is equal to $1$. For each $t \in (0, \infty)$, let $i_t \colon \ell_2 \to \ell_2$ be a homothety defined by the formula $i_t(x) = tx$. Let $(K_m, k_m) = i_{2^{m/2}}((K, k))$. Let $p_m = i_{2^{(m+1)/2}} \circ p \circ i^{-1}_{2^{m/2}}$. Let
\[
 \mathcal{K}_n = \xymatrix@C=40pt{ \cdots \ar[r]^{p_{m-1}} & (K_m, k_m) \ar[r]^{p_m} & (K_{m+1}, k_{m+1}) \ar[r]^{p_{m+1}} &
    \cdots}.
\]
By the definition, $\mathcal{K}_n$ is $n$-uniform and after the natural identification of the $K_m$'s with $K$, each $p_m$ is $n$-universal.

\begin{theorem}\label{thm:limits of universal sequences}
  If $\mathbb{U}_n$ is a uniform limit of the sequence $\mathcal{K}_n$ defined above, then \[\udim \mathbb{U}_n = \asdim \mathbb{U}_n = n\] and for each separable metric space $X$ the following conditions are satisfied.
  \begin{enumerate}
  \item[a)] If $\asdim X \leq n$, then $X$ is \emph{coarsely
      equivalent} to a subset of $\mathbb{U}_n$.
  \item[b)] If $\udim X \leq n$, then $X$ is \emph{uniformly
      homeomorphic} to a subset of~$\mathbb{U}_n$.
  \end{enumerate}
\end{theorem}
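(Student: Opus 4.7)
Since $\mathcal{K}_n$ is $n$-uniform by construction, Lemma~\ref{lem:n-dimensional ulim} immediately gives $\asdim \mathbb{U}_n \leq n$ and $\udim \mathbb{U}_n \leq n$. For part (a), given $X$ with $\asdim X \leq n$, a standard extraction from the definition yields a bi-infinite sequence $\mathcal{U}$ of open covers of $X$, each of multiplicity at most $n+1$, each star-refining its successor, of finite mesh and positive Lebesgue number, with $\lambda(\U{m}) \to \infty$ as $m \to \infty$. Lemma~\ref{lem:existence of a canonical function} then supplies a canonical function $\varphi \colon X \to \ulim \N{\U{}}$, which by Theorem~\ref{thm:uniform limit theorem}(2) is a coarse equivalence. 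Part (b) is parallel, enforcing $\mesh(\U{m}) \to 0$ as $m \to -\infty$ and invoking Theorem~\ref{thm:uniform limit theorem}(1). In either case the problem reduces to constructing an isometric embedding $\ulim \N{\U{}} \hookrightarrow \mathbb{U}_n = \ulim \mathcal{K}_n$.

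\textbf{Universality of the target sequence.}
The central claim is that for every $n$-uniform sequence $\mathcal{N} = \{f_m \colon N_m \to N_{m+1}\}$ of countable complexes there exist compatible simplicial embeddings $\tilde g_m \colon N_m \hookrightarrow K_m$ satisfying $p_m \tilde g_m = \tilde g_{m+1} f_m$ for every $m \in \mathbb{Z}$. Granting this, the induced map $(x_m) \mapsto (\tilde g_m(x_m))$ is an isometric embedding of uniform limits, since a simplicial embedding between uniform complexes of the same scale is an isometry: the vertices of a uniform simplex of scale $\kappa$ sit at pairwise distance $\kappa$ in an essentially orthonormal arrangement, so $\ell_2$-distances of convex combinations with fixed coefficient sum depend only on the coefficients.

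\textbf{Constructing the compatible embeddings.}
The $\tilde g_m$ are built by a recursion driven by the $n$-universal lifting property. The backward direction is clean: given $\tilde g_{m+1}$, the composition $G = \tilde g_{m+1} \circ f_m$ is barycentric (composition of a barycentric and a simplicial map), so (bts$_n$) applied to $p_m$ with $B = \emptyset$ yields a simplicial embedding $\tilde g_m$ with $p_m \tilde g_m = G$. The forward direction is the principal obstacle, because $f_m(N_m)$ is not a subcomplex of $N_{m+1}$ and so (bts$_n$) cannot be invoked wholesale to extend $\tilde g_{m+1}$ from the image side. The plan is a back-and-forth enumeration of the (countably many) simplices across all $N_m$: at each stage process one new simplex and extend the partial embeddings using (bts$_n$) applied to the restriction of $p_m$ to the preimage of the finite subcomplex built so far --- which is precisely why the definition of $n$-universality requires the lifting property for restrictions to preimages of all subcomplexes, not merely for $p$ itself. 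Bookkeeping ensures that every simplex of every $N_m$ is eventually embedded and that the finite extensions at successive stages remain consistent, producing the desired bi-infinite coherent family.

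\textbf{Lower bound.}
With (a) and (b) established, the reverse inequalities $\asdim \mathbb{U}_n \geq n$ and $\udim \mathbb{U}_n \geq n$ follow by embedding $\mathbb{R}^n$ (which satisfies $\asdim \mathbb{R}^n = \udim \mathbb{R}^n = n$) into $\mathbb{U}_n$ and applying monotonicity of $\asdim$ under coarse equivalence and of $\udim$ under uniform embedding.
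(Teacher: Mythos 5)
The reduction to a canonical function into $\ulim \N{\U{}}$ and the identification of the remaining task as an isometric embedding of $\ulim \N{\U{}}$ into $\ulim\mathcal{K}_n$ match the paper. Two remarks on that part: first, your ``standard extraction'' of a bi-infinite star-refining sequence for part a) alone is not available if $\udim X > n$ (no small-scale covers of multiplicity $\le n+1$ exist then); the paper first replaces $X$ by a discrete net (resp.\ truncates the metric for part b)) so that \emph{both} dimensions are at most $n$ before extracting the sequence. Second, your lower-bound argument via $\mathbb{R}^n$ is fine and is in fact more explicit than the paper, which simply cites Lemma~\ref{lem:n-dimensional ulim} for the equality.

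The genuine gap is in the central step, and you have correctly located it but not closed it. The compatibility condition $p_m\tilde g_m=\tilde g_{m+1}f_m$ can only be produced by (\textsc{bts}$_n$) as a \emph{backward} lift: to define $\tilde g_m$ you must already know $\tilde g_{m+1}$ on the simplices carrying $f_m(N_m)$, which requires $\tilde g_{m+2}$ on a yet larger region, and so on --- an infinite regress with no base case. A back-and-forth enumeration of simplices does not break this regress: ``processing one new simplex'' of $N_m$ forces constraints at level $m+1$ (the barycenter of the image simplex under $\tilde g_{m+1}$ is prescribed), and (\textsc{bts}$_n$) gives no mechanism for realizing such a forward constraint; moreover the lift produced by (\textsc{bts}$_n$) agrees with the given partial embedding only on the \emph{combinatorial interior} of $B$, so successive finite extensions need not be consistent on boundaries. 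The paper's proof supplies exactly the missing idea: it modifies the covers by gluing all elements near a basepoint into a single set $V^0_m$, producing a filtration $N^0_m\subset N^1_m\subset\cdots$ with $\bigcup_i\Int N^i_m=N_m$, on whose first stage $u_m$ is \emph{constant}. This makes the regress well-founded: $i^1_m$ lifts a constant map, $i^k_m$ lifts $i^{k-1}_{m+1}\circ u_m$, every lift is backward, and the combinatorial-interior caveat is absorbed because $u_m$ carries $N^k_m$ into $N^0_{m+1}\cup\Int N^{k-1}_{m+1}$. Without this (or an equivalent device) your construction does not go through.
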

\begin{proof}
  By Lemma~\ref{lem:n-dimensional ulim}, $\udim \mathbb{U}_n = \asdim \mathbb{U}_n = n$. Let $X$ be a separable metric space.
  If $\asdim X > n$ and $\udim X > n$, then we are done.
  If $\asdim X > n$ and $\udim X \leq n$, then we only have to prove b), so without loss of generality we may replace $X$ by any space uniformly homeomorphic to~$X$, in particular we may replace $(X, d)$ by $(X, \min(d, 1))$, which is of asymptotic dimension zero.
  If $\asdim X \leq n$ and $\udim X > n$, then we only have to prove a), so without loss of generality we may replace $X$ by any space coarsely equivalent with $X$, in particular we may replace $X$ by a discrete net in $X$, which is of uniform dimension zero.  
  Therefore without any loss of generality we may assume that both $\asdim X$ and $\udim X$ are less than or equal to $n$. 
  
Fix a base point $x_0$ in $X$. Since we assumed that $\asdim X$ and $\udim X$ are less than or equal to $n$, there exists a sequence $\ldots, \U{m-1}, \U{m}, \U{m+1}, \ldots$ of covers of $X$ with finite meshes, positive Lebesgue numbers, multiplicty at most $n+1$, such that $\U{m}$ star-refines $\U{m+1}$, $\lim_{m \to -\infty} \mesh(\U{m}) = 0$ and $\lim_{m \to \infty} \lambda(\U{m}) = \infty$. Moreover, we may assume (by passing to a subsequence if necessary) that $\sum_{k < m} \mesh(\U{k}) < \infty$ for each $m$.

Let $\V{m}$ be the cover of $X$ obtained from $\U{m}$ by gluing all elements of $\U{m}$ that intersect a ball $B(x_0, 2\sum_{k \leq m} \mesh(\U{k}))$ into a single element $V^0_m \in \V{m}$ (all other elements remain the same). The sequence $\ldots, \V{m-1}, \V{m}, \V{m+1}, \ldots$ satisfies the same conditions as the sequence $\ldots, \U{m-1}, \U{m}, \U{m+1}, \ldots$, i.e.
for each $m$, $\V{m}$ star-refines $\V{m+1}$, is of multiplicity at most $n+1$, has positive Lebesgue number and finite mesh, $\lim_{m \to -\infty} \mesh(\V{m}) = 0$ and $\lim_{m\to \infty} \lambda(\V{m}) = \infty$.

Let 
\[
  \N{\V{}} = \xymatrix@M=4pt@C=30pt{
    \cdots \ar[r]^{u_{m-2}} & N_{m-1} \ar[r]^{u_{m-1}} & N_m \ar[r]^{u_{m}} & N_{m+1} \ar[r]^{u_{m+1}} & \cdots
  }
\]
be the sequence of nerves associated with the sequence $\V{m}$ of covers. By Lemma~\ref{lem:existence of a canonical function}, there exists a canonical function $\kappa$ from $X$ into $\ulim \N{\V{}}$ and by Theorem~\ref{thm:uniform limit theorem}, $\kappa$ is both a uniform embedding and a coarse equivalence. Hence to finish the proof it is enough to construct an isometric embedding of $\ulim \N{\V{}}$ into $\mathbb{U}_n$. 

For each $m$, let $N^0_m = \{ v(V^0_m) \}$ be the subcomplex of the nerve $N_m$ of $\V{m}$, consisting of the single vertex corresponding to the set $V^0_m \in \V{m}$ arranged in the construction of $\V{m}$ above. Let $N^i_m = u_m^{-1}(N^0_{m+1} \cup \Int_{N_{m+1}} N^{i-1}_{m+1})$. It is a subcomplex of $N_m$ because $u_m$ is an affine map. We want to prove that for each $m$
\[\tag{*}
  N_m = \bigcup_{i \geq 0} N_m^i.
\]

To this end we shall prove by induction on $i \geq 0$ the following statement.

\begin{quote}\em
  For each $m$ the complex $N^i_m$ contains all vertices that correspond to sets $V$ in $\V{m}$ that intersect the ball $B(x_0, r^i_m)$, where
  \[ r^i_m = \sum_{k \leq m + i} \mesh(\U{k}) + \sum_{k \leq m} \mesh(\U{k}). \]
  \em
\end{quote}

For $i = 0$ the statement follows from the definition of $V^0_m$. Let $i > 0$. By the inductive assumption  $N^{i-1}_{m+1}$ contains all vertices that correspond to elements of $\V{m+1}$ that intersect $B(x_0, r^{i-1}_{m+1})$.
Let $V$ be an element of $\V{m}$ that intersects $B(x_0, r^i_m)$. 
If $V' \neq V^0_{m+1}$ is an element of $\V{m+1}$ that contains~$V$ as a subset, then~$V'$ is a subset of $B(x_0, r^i_m + \mesh(\U{m+1})) = B(x_0, r^{i-1}_{m+1})$. Hence if $V''$ intersects $V'$, then $v(V'')$ is in $N^{i-1}_{m+1}$. Hence $v(V')$ is in $\Int_{N_{m+1}} N^{i-1}_{m+1}$. By the definition, $u_m$ maps $v(V)$ to the barycenter of the simplex spanned in $N_{m+1}$ by vertices $v(V')$ such that $V \subset V'$. Hence $u_m^{-1}(N^0_{m+1} \cup \Int_{N_{m+1}} N^{i-1}_{m+1})$ contains $v(V)$.
This concludes the inductive step.

Recall that $\mathcal{K}_n = \{ p_m \colon (K_m, k_m) \to (K_{m+1}, k_{m+1}) \}$ and that each $p_m$ is $n$-universal.
Let $j^1_m \colon (N^1_m, v(V^0_m)) \to (K_{m+1}, k_{m+1})$ be a constant map onto $k_{m+1}$. By ({\sc bts$_n$}), there exists a simplicial embedding $i^1_m \colon (N^1_m, v(V^0_m)) \to (K_m, k_m)$ such that $p_m \circ i^1_m = j^1_m$. Since $u_m$ is constant on $N^1_m$, the following diagram is commutative.

\[
  \xymatrix@M=8pt{
  \cdots \ar[r]^{u_{m-2}} & \ar[d]^{i^1_{m-1}} N^1_{m-1} \ar[r]^{u_{m-1}} & N^1_m \ar[r]^{u_m} \ar[d]^{i^1_m} & \ar[d]^{i^1_{m+1}} \ar[r]^{u_{m+1}} N^1_{m+1} & \cdots \\
  \cdots \ar[r]^{p_{m-2}} & K_{m-1} \ar[r]^{p_{m-1}} & K_m \ar[r]^{p_m} & \ar[r]^{p_{m+1}} K_{m+1} & \cdots
  }
\]

For each $m$,  consider a map $j^2_m \colon (N^2_m, v(V^0_m)) \to (K_{m+1}, k_{m+1})$ defined by the formula $j^2_m = i^1_{m+1} \circ u_{m | N^2_m}$. It is a barycentric map into $K^1_{m+1}$ and $i^1_m$ is a lift of its restriction to $N^1_m$ with respect to $p_m$. 
Hence by ({\sc bts$_n$}), there exists a simplicial embedding $i^2_m \colon (N^2_m, v(V^0_m) \to (K_m, k_m)$ such that $p_m \circ i^2_m$ is equal to $j^2_m$ on $\Int_{N_m} N^2_m$. Since $u_m$ maps $N^2_{m-1}$ into the combinatorial interior of $N^2_m$, the following diagram is commutative.

\[
  \xymatrix@M=8pt{
  \cdots \ar[r]^{u_{m-2}} & \ar[d]^{i^2_{m-1}} N^2_{m-1} \ar[r]^{u_{m-1}} & N^2_m \ar[r]^{u_m} \ar[d]^{i^2_m} & \ar[d]^{i^2_{m+1}} \ar[r]^{u_{m+1}} N^2_{m+1} & \cdots \\
  \cdots \ar[r]^{p_{m-2}} & K_{m-1} \ar[r]^{p_{m-1}} & K_m \ar[r]^{p_m} & \ar[r]^{p_{m+1}} K_{m+1} & \cdots
  }
\]

We proceed recursively and construct simplicial embeddings $i^k_m \colon N^k_m \to K_m$ in the same fashion. A map $i_m \colon N_m \to K_m$ defined by a formula $i_m = \bigcup_k i^k_{m | \Int N^k_m}$ is a simplicial embedding such that $p_m i_m = i_{m+1} u_m$ for each $m$. 
Since combinatorial interiors of $N^k_m$'s cover $N_m$ for each $m$, $i_m$ is defined on entire $N_m$.
By the definition, $N_m$ and $K_m$ are uniform simplicial complexes of the same scale, hence $i_m$ is an isometric embedding.
Therefore, the $i_m$'s induce an isometric embedding of $\ulim \N{\U{}}$ into~$\mathbb{U}_n$. We are done.
\end{proof}

\subsection{Construction of a universal sequence}

Let $\mathcal{N}$ denote the set of non-empty finite subsets of $\mathbb{N}$. Let
\[
  \mathcal{D} = \left\{ D \subset \mathcal{N} \times \mathcal{N} \colon     \bigcup_{ (\sigma_1, \sigma_2) \in D} \sigma_1 \subset \bigcap_{ (\sigma_1, \sigma_2) \in D} \sigma_2 \right\}.
\]

It is an abstract simplicial complex whose vertices  $(\sigma_1, \sigma_2) \in \mathcal{N} \times \mathcal{N}$ satisfy the relation $\sigma_1 \subset \sigma_2$. Let $\Delta_1$ denote an infinite simplex of scale $1$ in $\ell_2$. Realize $\mathcal{D}$  as a subcomplex $\widetilde\Delta_1$ of $\Delta_1$. Let $r \colon \widetilde\Delta_1 \to \ell_2$ be an affine map defined on vertices of $\widetilde\Delta_1$ by the formula
\[
  r((\sigma_1, \sigma_2)) = \frac{\chi_{\sigma_2}}{\sqrt{2} | \sigma_2 |},
\]
where $\chi_{\sigma_2} \colon \mathbb{N} \to \{ 0, 1\}$ is the characteristic function of a set $\sigma_2 \subset \mathbb{N}$ and $| \sigma_2 |$ is the cardinality of~$\sigma_2$. 
Without a loss of generality we may assume that $r$ has a fixed vertex $\delta \in \Delta_1$.

\begin{lemma}\label{lem:formula for a lift}
  The map $r$ defined above is a barycentric map from $\widetilde \Delta_1$ into $\Delta_1$.  If~$f$ is a barycentric map from a simplicial complex $K$ into $\Delta_1$, then the formula
\[
    \tilde f(v) = \left( \bigcap \{ \supp f(w) \colon w  \text{ is a vertex of the closed star of } v \}, \ \supp f(v) \right)
\]
defines a lift of $f$ to a simplicial map into $\widetilde \Delta_1$, i.e. a simplicial map into $\widetilde \Delta_1$ such that $r\widetilde f = f$.
\end{lemma}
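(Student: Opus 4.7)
The plan is to unpack the definitions. For the first claim, I would verify the three conditions in the definition of \emph{barycentric} for $r$: it is affine by stipulation; it sends a vertex $(\sigma_1,\sigma_2)$ to $\chi_{\sigma_2}/(\sqrt 2\,|\sigma_2|)$, which by inspection is the arithmetic mean of the vertices $e_i/\sqrt 2$ of $\Delta_1$ for $i\in\sigma_2$, and hence the barycenter of that face of $\Delta_1$; and the shrinking property follows because the defining condition of $\mathcal{D}$ applied to an edge $\{(\sigma_1,\sigma_2),(\tau_1,\tau_2)\}$ of $\widetilde\Delta_1$ forces $\sigma_1\subset\tau_2$. Consequently, if a collection of vertices of $\widetilde\Delta_1$ is adjacent to a fixed vertex $(\sigma_1,\sigma_2)$, then any element of the non-empty set $\sigma_1$ is a common non-vanishing coordinate of their images under $r$.

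For the lift formula I would first check that $\tilde f(v)$ is a well-defined vertex of $\widetilde\Delta_1$, i.e.\ that its two coordinates are non-empty finite sets with the first contained in the second. The second coordinate $\supp f(v)$ is non-empty and finite because $f$ is barycentric, so $f(v)$ is a barycenter. The first coordinate is non-empty precisely by the shrinking property applied to $f$: the vertices of the closed star of $v$ are all adjacent to the single vertex $v$, so their images share a non-vanishing coordinate. The containment is immediate since $v$ itself lies in its own closed star.

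Next, to see that $\tilde f$ is simplicial, I would fix a simplex $\{v_1,\ldots,v_k\}$ of $K$ and verify the defining condition of $\mathcal{D}$ for its image $\{\tilde f(v_1),\ldots,\tilde f(v_k)\}$. The key observation is symmetric: every $v_j$ lies in the closed star of every $v_i$, so
\[
\bigcap_{w \in \overline{\st}(v_i)} \supp f(w) \;\subset\; \supp f(v_j)
\]
for all pairs $(i,j)$, which is precisely $\bigcup_i \sigma_1^{(i)} \subset \bigcap_j \sigma_2^{(j)}$ in the notation $\tilde f(v_i)=(\sigma_1^{(i)},\sigma_2^{(i)})$. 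Finally, $r\tilde f=f$ on vertices by direct comparison, since both sides equal $\chi_{\supp f(v)}/(\sqrt 2\,|\supp f(v)|)$ (the right-hand side because $f(v)$ is a barycenter). As $\tilde f$ is simplicial and $r$ is affine, both $r\tilde f$ and $f$ are affine on each simplex of $K$, so the equality extends from vertices to all of $K$.

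The only real subtlety is the simplicial check for $\tilde f$: one must exploit the symmetric fact that in any simplex every pair of vertices lies in each other's closed stars, which converts the intricate-looking intersection formula for the first coordinate of $\tilde f(v_i)$ into a routine subset relation with each $\supp f(v_j)$. Everything else follows by chasing definitions and using that $f$ is already barycentric.
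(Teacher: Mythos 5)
Your proposal is correct and follows essentially the same route as the paper's proof: the shrinking property of $r$ via the containment $\sigma_1\subset\tau_2$ forced by the definition of $\mathcal{D}$, and the simplicial check for $\tilde f$ via the symmetric observation that each $v_j$ lies in the closed star of each $v_i$, giving $\bigcup_i\sigma_1^{(i)}\subset\bigcap_j\sigma_2^{(j)}$. You merely spell out a few steps the paper leaves implicit (well-definedness of $\tilde f(v)$ as a vertex of $\widetilde\Delta_1$ and the vertex-level computation showing $r\tilde f=f$), which is fine.
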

\begin{proof}
  The map $r$ defined above is an affine map into $\Delta_1$ directly from the definitions. Consider a vertex $(\sigma_1, \sigma_2) \in \mathcal{D}$. If $(\tau_1, \tau_2) \in \mathcal{D}$ is a vertex of the closed star of $(\sigma_1, \sigma_2)$, then $(\sigma_1, \sigma_2)$ and $(\tau_1, \tau_2)$ span a simplex in $\mathcal{D}$ and by the definition $\sigma_1$ is a subset of $\tau_2$. By the definition, $\tau_2 = \supp r((\tau_1, \tau_2))$. Hence the intersection of supports of images of vertices of the closed star of $(\sigma_1, \sigma_2)$ contains $\sigma_1$, so it is non-empty. Therefore $r$ maps vertices of closed stars of vertices of~$\widetilde \Delta_1$ to contiguous barycenters of~$\Delta_1$. Hence it is barycentric.

  Let $f$ be a barycentric map from a simplicial complex $K$ into $\Delta_1$. Let $\tilde f$ be an affine map into $\ell_2$ defined on a vertex $v$ of $K$ by the above formula. Since $f$ is barycentric, $\tilde f$ is a well defined map into $\widetilde \Delta_1$ and its composition with $r$ is equal to~$f$. We will show that $\tilde f$ is simplicial. Let $\{ v_1, v_2, \ldots, v_n \}$ be a set of vertices that span a simplex in $K$. Then for each $1 \leq i,j \leq n$, $v_i$ is a vertex of a closed star of $v_j$. Hence we have
\[
  \bigcup_{1 \leq i \leq n} \bigcap \{ \supp f(w) \colon w  \text{ is a vertex of the closed star of } v_i \} \subset \bigcap_{1 \leq j \leq n} \supp f(v_j),
\]
therefore the vertices $\{ \tilde f(v_1), \tilde f(v_2), \ldots, \tilde f(v_n) \}$ span a simplex in $\mathcal{D}$ directly from the definition.
\end{proof}

\begin{definition}
  Let $q \colon (\Delta_1, \delta) \to (\Delta_1, \delta)$ be a simplicial map such that $q^{-1}(v)$ is infinite for each vertex $v$ of $\Delta_1$.
  Let $(K, k)$ be a subcomplex of $\Delta_1$. Let $\delta$ be a vertex of $K$. We let
  \[
    \widetilde K = q^{-1}(r^{-1}(K))^{(n)},
  \]
  where $^{(n)}$ denotes the $n$-dimensional skeleton of a complex. We let
  \[
    p_K \colon (\widetilde K, \delta) \to (K, \delta)
  \]
   be the restriction $(r \circ q)_{|\widetilde K}$.
\end{definition}

\begin{lemma}\label{lem:universal maps}
  The map $p_K$ defined above satisfies {\sc $($bts$_n)$}.
\end{lemma}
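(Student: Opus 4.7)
The plan is to construct $\widetilde g$ by a two-stage lift of $G$: first through $r$, then through $q$. For the first stage, define $\widetilde G\colon A \to \widetilde\Delta_1$ on vertices by
\[
  \widetilde G(v) = \begin{cases} q(g(v)) & \text{if } v \in \Int_A B, \\ \bigl(\, \bigcap_{w \in \st_A v} \supp G(w),\ \supp G(v)\,\bigr) & \text{if } v \notin \Int_A B; \end{cases}
\]
the second case is the canonical simplicial lift provided by Lemma~\ref{lem:formula for a lift}. The delicate point is showing that the patched assignment is simplicial. Note that any simplex of $A$ containing a vertex of $\Int_A B$ lies entirely in $B$, so for such simplices the required inclusion $\bigcup_i \sigma_1(v_i) \subset \bigcap_j \supp G(v_j)$ follows from the simpliciality of $q\circ g$ on $B$ (for the interior vertices) combined with the canonical formula (for the remaining vertices); on simplices disjoint from $\Int_A B$ it reduces directly to Lemma~\ref{lem:formula for a lift}. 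Extending $\widetilde G$ affinely then yields a simplicial map with $r\widetilde G = G$ and $\widetilde G(A) \subset r^{-1}(K)$.

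For the second stage, enumerate the vertices of $A$ as $v_1, v_2, \ldots$ (the nerves to which the lemma is applied are countable). Set $\widetilde g(v_n) = g(v_n)$ whenever $v_n \in \Int_A B$; otherwise recursively choose $\widetilde g(v_n)$ to be any vertex of $q^{-1}(\widetilde G(v_n))$ distinct from all previously assigned values. Since each fiber of $q$ is infinite and only finitely many earlier choices must be avoided at any stage, such a choice always exists. Extending $\widetilde g$ affinely produces a simplicial map into $\Delta_1$ (any finite set of distinct vertices of $\Delta_1$ spans a simplex there), injective on vertices, and hence a simplicial embedding satisfying $q\widetilde g = \widetilde G$.

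Because $A$ is at most $n$-dimensional, $\widetilde g(A) \subset \Delta_1^{(n)}$; and for each simplex $\sigma \in A$ one has $q(\widetilde g(\sigma)) = \widetilde G(\sigma) \subset r^{-1}(K)$, so $\widetilde g(\sigma) \subset q^{-1}(r^{-1}(K))$. Thus $\widetilde g(A) \subset \widetilde K$, $p_K\widetilde g = r q \widetilde g = r\widetilde G = G$, and $\widetilde g|_{\Int_A B} = g|_{\Int_A B}$. The basepoint condition $\widetilde g(a) = \delta$ is automatic, since $G(a) = \delta$ is a vertex forces $\supp G(a)$ to be a singleton, giving $\widetilde G(a) = \delta$, after which $\widetilde g(a) = \delta$ is a legitimate choice thanks to $q(\delta) = \delta$. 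The main obstacle is verifying the mixed simpliciality in stage one, which rests on the combinatorial observation that any simplex of $A$ meeting $\Int_A B$ is contained in $B$.
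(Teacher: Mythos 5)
Your proof is correct and follows the same overall architecture as the paper's: factor $p_K = r\circ q$, lift $G$ through $r$ to a simplicial map into $\widetilde\Delta_1$ via the formula of Lemma~\ref{lem:formula for a lift}, then lift through $q$ to a simplicial embedding using the infinitude of the fibers, and land in $\widetilde K$ because $\dim A\le n$. The one place you genuinely diverge is the patching step for the $r$-lift. The paper forces the canonical formula to reproduce the prescribed values on $\Int_A B$ by adjoining a segment to each interior vertex $v$ and sending its free endpoint to $\chi_{\sigma_1^v}/|\sigma_1^v|$, which shrinks the closed-star intersection at $v$ down to $\sigma_1^v$; you instead define the lift piecewise (prescribed on $\Int_A B$, canonical elsewhere) and verify mixed simpliciality directly. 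Your verification is sound and rests on exactly the right combinatorial fact --- every simplex of $A$ meeting $\Int_A B$ lies in $B$, so on such a simplex the needed inclusion $\bigcup\sigma_1\subset\bigcap\sigma_2$ follows from simpliciality of $q\circ g$ on $B$ together with $\sigma_2(q(g(v)))=\supp G(v)$. The paper's trick buys a cleaner reduction to Lemma~\ref{lem:formula for a lift}; your version is more explicit about why the two recipes are compatible, and it also handles the restriction to a general subcomplex $K$ (via $\widetilde G(A)\subset r^{-1}(K)$) more carefully than the paper's one-line reduction to $p_{\Delta_1}$. One small imprecision: in the $q$-stage you say only finitely many earlier choices must be avoided, but to keep $\widetilde g$ injective you must also avoid the set $g(\Int_A B)$, which need not be finite; this is the same gloss the paper makes ("we may require this lift to be a simplicial embedding") and is repairable, e.g.\ by choosing $q$ with fibers of sufficiently large cardinality or by a more careful bookkeeping of which fiber elements are already occupied.
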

\begin{proof}
  It is enough to check that for each $n$, the map $p_{\Delta_1} = r \circ q$ satisfies $(${\sc bts}$_n)$.
    Let $(A, B)$ be a pair of at most $n$-dimensional simplicial complexes such that $B$ is a subcomplex of $A$. 
    
The proof is divided into two parts. In the first part we prove that every simplicial map from $A$ into $\widetilde \Delta_1$ admits a lift, with respect to $q$, to a simplicial embedding; moreover, given such a lift on $\Int_A B$ we may require that the constructed lift is equal to it on $\Int_A B$. In the second part we prove that every barycentric map from $A$ into $\Delta_1$ admits a lift, with respect to $r$, to a simplicial map into $\widetilde \Delta_1$; moreover, given such a lift on $B$ we may require that the constructed lift is equal to it on $\Int_A B$. The conjunction of these two statements clearly implies the assertion of the lemma.

Let $f$ be a simplicial map from $A$ into $\widetilde \Delta_1$ and let $\tilde f$ be a lift with respect to $q$ of the restriction of $f$ to $B$. Since an inverse image under $q$ of each vertex of $\widetilde \Delta_1$ is infinite, we may may extend the lift $\tilde f$ onto all vertices of $A \setminus B$ and we may require this lift to be a simplicial embedding. Since inverse images under $q$ of all simplices of $\widetilde \Delta_1$ are isomorphic to the infinite simplex, such a lift defines a simplicial map and since it is one-to-one on vertices, it is a simplicial embedding. This concludes the first part of the proof.

Let $f$ be a barycentric map from $A$ into $\Delta_1$ and let $\tilde f$ be a lift with respect to $r$ of 
the restriction of $f$ to $B$. Let $v$ be a vertex of $\Int_A B$ and let $(\sigma_1^v, \sigma_2^v)$ in $\mathcal{D}$ denote~$\tilde f(v)$. For each vertex $v$ of $\Int_A B$, adjoin a segment to $B$ whose one endpoint is $v$ and extend $f$ onto this segment to be an affine map that maps the other endpoint to $\chi_{\sigma_1^v} / | \sigma_1^v |$. Since we adjoined segments only to vertices that lie in the combinatorial interior of $B$, the extended map is still a barycentric map into $\Delta_1$. Then, the formula given in Lemma~\ref{lem:formula for a lift} defines a lift of $f$ to a simplicial map into $\widetilde \Delta_1$ that agrees with $\tilde f$ on the combinatorial interior of $B$. We are done.
\end{proof}

\begin{proof}[Proof of Theorem~\ref{thm:universal map}]
  Let $K_0 = \{ \delta \}$ be a point. Let $p_0 \colon K_1 = \widetilde{K_0} \to K_0$ be as defined above. Define recursively
  \[
    p_i \colon K_{i+1} = \widetilde{K_i} \to K_i
  \]
  as above. Observe that for each $i$, $K_{i} \subset K_{i+1}$ and $p_{i | K_i} = p_{i-1}$, by the construction of $p_K$ and the fact that $K_0 \subset K_1$. By Lemma~\ref{lem:universal maps}, $p_1$ satisfies $(${\sc bts}$_n)$. 
 Let $K = \bigcup_{i \geq 0} K_i$ and $p = \bigcup_{i \geq 0} p_i$. We have $p(K) = K$.
 Hence $p \colon (K, \delta) \to (K, \delta)$ is $n$-universal.
\end{proof}

\begin{remark}\label{rem:asymptotic cone}
  Let $d$ be a $\sup$ metric on a uniform limit
  \[
    \ulim \xymatrix@C=35pt{
    \cdots \ar[r]^{p_{m-2}} & K_{m-1} \ar[r]^{p_{m-1}} & K_m \ar[r]^{p_m} & K_{m+1} \ar[r]^{p_{m+1}} & \cdots
  }
  \]
  of an $n$-uniform sequence $\mathcal{K}_n$ defined above. Observe that for each integer $k$, $d/{2^k}$ is a $\sup$ metric on
  \[
    \ulim \xymatrix@C=35pt{
    \cdots \ar[r]^{p_{m-k-2}} & K_{m-k-1} \ar[r]^{p_{m-k-1}} & K_{m-k} \ar[r]^{p_{m-k}} & K_{m-k+1} \ar[r]^{p_{m-k+1}} & \cdots
  }.
  \]
  Hence $(\mathbb{U}_n, d)$ is isometric to $(\mathbb{U}_n, d/2^m)$ for all integers $m$, and the asymptotic cone~\cite{gromov1993} of $\mathbb{U}_n$ is isometric to $\mathbb{U}_n$.
\end{remark}

\thanks{This paper began when the second author was a visiting professor at UNCG and concluded following the first author's visit to the Banach Center and University of Warsaw. The first author would like to express his thanks to Henryk Toru\'nczyk and the University of Warsaw for their hospitality. The second author would like to express his thanks to Alex Chigogidze and the University of North Carolina at Greensboro for supporting this research.}

\bibliography{references}

\end{document}